\newcommand{\cnvxcnvgres}[1]{C_1~\left( \dfrac{2L_{\max} -\mu_{\min}}{2L_{\max} +\mu_{\min}}\right)^{#1}} 
\newcommand{\singupdatebound}[1]{C_0 a^{#1}} 
\LetLtxMacro{\oldalgorithmic}{\algorithmic}
\LetLtxMacro{\endoldalgorithmic}{\endalgorithmic}
\renewenvironment{algorithmic}[1][0]{%
	\hrulefill\par
	\oldalgorithmic[#1]}
{\endoldalgorithmic\par
	\vspace*{-.5\baselineskip}
	\hrulefill\par
}
\begin{document}
	\title{Streaming Solutions for Time-Varying \\Optimization Problems}
	\author{Tomer Hamam, Justin Romberg, \IEEEmembership{Fellow, IEEE}
		\thanks{The authors are with the School of Electrical and Computer Engineering at Georgia Tech in Atlanta.  Email: tomer.hamam@gatech.edu, jrom@ece.gatech.edu.}
		\thanks{This work was supported by and ARL DCIST CRA W911NF-17-2-0181 and by C-BRIC, one of six centers in JUMP, a Semiconductor Research Corporation (SRC) program sponsored by DARPA.}
		\thanks{ Submitted October 30, 2021.}
			}

	\maketitle
\begin{abstract}
%
%
This paper studies streaming optimization problems that have objectives of the form
$ \sum_{t=1}^Tf(\vx_{t-1},\vx_t)$.   In particular, we are interested in how the solution $\hat\vx_{t|T}$ for the $t$th frame of variables changes as $T$ increases.  While incrementing $T$ and adding a new functional and a new set of variables does in general change the solution everywhere, we give conditions under which  $\xhat_{t|T}$ converges to a limit point $\vx^*_t$ at a linear rate as $T\rightarrow\infty$.  As a consequence, we are able to derive theoretical guarantees for algorithms with limited memory, showing that limiting the solution updates to  only a small number of frames in the past sacrifices almost nothing in accuracy.
We also present a new efficient Newton online algorithm (NOA), inspired by these results, that updates the solution with fixed per-iteration complexity of $ \ordof{3Bn^3}$, independent of $T$, where $B$ corresponds to how far in the past the variables are updated, and $n$ is the size of a single block-vector. Two streaming optimization examples, online reconstruction from non-uniform samples and inhomogeneous Poisson intensity estimation, support the theoretical results and show how the algorithm can be used in practice.
\end{abstract}	

\begin{IEEEkeywords}
time-varying, aggregated, incremental, optimization, unconstrained, filtering, smoothing, Kalman, RTS , streaming, cumulative, Newton method, graph optimization, optimal filtering, Bayesian filtering, online, time-series, partially separable, block-tridiagonal. 
\end{IEEEkeywords}

\IEEEpeerreviewmaketitle
	

\section{Introduction}
\label{sec:intro}
\IEEEPARstart{W}{e} consider time-varying convex optimization problems of the form 
\begin{equation}
	\label{eq:def:general problem formulation}
	\minimize_{\xbar_T}J_T(\xbar_T) := \sum_{t=1}^{T} f_t(\vx_{t-1},\vx_{t} ), 
\end{equation}
where each $\vx_t$ is block variable in $\R^n$ and $\xbar_T=(\vx_0;\vx_1;\cdots;\vx_T)\in\R^{n(T+1)}$.
%
We are particularly interested in solving these programs {\em dynamically}; we study below how the solutions $\hat\xbar_T$ evolve as $T$ increases and how we can move from the minimizer of $J_T$ to the minimizer of $J_{T+1}$ in an efficient manner.

Optimization problems of the form \eqref{eq:def:general problem formulation} arise, broadly speaking, in applications where we are trying to estimate a time-varying quantity from data that is presented sequentially.  In signal processing, they are used for online least-squares \cite{jiang2004revisit, vahidi2005recursive} and estimation of sparse vectors \cite{li2007estimation, lewicki1999coding, asif2009dynamic,gruber1997statistical, angelosante2010online}. 
They have also been used for low-rank matrix recovery in recommendation systems \cite{xu2017simultaneous, NIPS2016_6449f44a}, audio restoration and enhancement \cite{ godsill2002digital, fong2002monte}, medical imagery applications \cite{sarkka2012dynamic, lin2006dynamic},
and inverse problems \cite{tarantola2005inverse,kaipio2006statistical}. Applications in other fields include online convex optimization \cite{hall2015online, cesa2012new, langford2009sparse}, adaptive learning \cite{andrieu2001rao, hartikainen2010kalman}, time series prediction \cite{sarkka2004time,sarkka2007cats}, and optimal control \cite{stengel1994optimal,maybeck1982stochastic}. 
Closely related problems also come from estimation algorithms in wireless sensor networks \cite{doherty2001convex, shi2010distributed}, multi-task learning \cite{hall2015online}, and in simultaneous localization and mapping (SLAM) and pose graph optimization (PGO) \cite{zou2012coslam, kaess2012isam2, shan2020orcvio,carlone2016planar}.

%

Even though each of the functions in \eqref{eq:def:general problem formulation} only depends on two block variables, the reliance of $f_{t}$ on both $\vx_t$ and $\vx_{t-1}$ couples all of the functionals in the sum.  When a new ``frame'' is added to the program above, meaning $T\rightarrow T+1$, a new term is added to the sum in the functional, and a new set of variables is introduced.  This new term will affect the optimality of all of the $\vx_t$.

The most well-known example of \eqref{eq:def:general problem formulation} is when the $f_t$ are least-squares losses on linear functions of $\vx_{t-1}$ and $\vx_t$, 
\begin{equation}
\label{eq:linearls}
	f_t(\vx_{t-1},\vx_t) = \|\mB_{t}\vx_{t-1} +\mA_t\vx_t-\vy_t\|^2.
\end{equation}
In this case, \eqref{eq:def:general problem formulation} has the same mathematical structure as the Kalman filter \cite{kalman1960new,rauch1965maximum, aravkin2013kalman}  and can be solved with a streaming least-squares algorithm.
If we use $\hat{\vx}_{t|T}$ to denote the optimal estimate of $\vx_t$ when $J_T$ is minimized, then $\hat\vx_{T+1|T+1}$ can be computed by applying an appropriate affine function to $\hat\vx_{T|T}$, and then the $\hat\vx_{T-j|T+1}, j=1,\ldots,T$ are computed recursively (by again applying affine functions) with a backward sweep through the variables.  
This updating algorithm, which requires updating each frame only once, follows from the fact that the system of equations for solving $J_{T+1}$ has a {\em block-tridiagonal} structure; a block LU factorization can be computed on the fly and the $\hat\vx_{T-j|T}$ are then computed using back substitution.

When the $f_t$ have any other form than \eqref{eq:linearls},  moving from the solution of $J_T$ to $J_{T+1}$ is much more complicated.  
Unlike the linear least-squares case, we cannot update the solutions with a single backward sweep.
%
%
However, as we describe in detail in Section~\ref{sec:cnvx case} below, \eqref{eq:def:general problem formulation} retains a key piece of structure from the least-squares case. The Hessian matrix of $J_T$ has the same block-tridiagonal structure as the system matrix corresponding to \eqref{eq:linearls}.
%


Our main mathematical contribution shows that when the Hessian matrix exhibits a kind of block diagonal dominance in addition to the tridiagonal structure, the solution vectors are only weakly coupled in that adding a new term to \eqref{eq:def:general problem formulation} does not significantly affect the solutions far in the past. 

Theorem~\ref{thm:ls convergence} in Section~\ref{sec:ls} and Theorem~\ref{thm:convergence of the updates} in Section~\ref{sec:cnvx case} below show that the difference between $\hat{\vx}_{t|T+1}$ and $ \hat{\vx}_{t|T}$ decreases exponentially in $T-t$.  As a result, $\hat{\vx}_{t|T}$ and $\hat{\vx}_{t|T+1}$ will not be too different for even moderate $T-t$. 

The correction terms' rapid convergence gives rise to the \textit{optimization filtering} approach described in the second half of Section \ref{sec:cnvx case}.  We show how we can (approximately) solve \eqref{eq:def:general problem formulation} with bounded memory by only updating a relatively small number of the $\vx_t$ in the past each time a new loss function is added.  We show that under appropriate conditions, the error due to this ``truncation'' of early terms does not accumulate as $T$ grows, meaning that the online algorithm is stable.  Theorem \ref{thm:cnvxbufferrbnd} gives these sufficient conditions and bounds the error as a function of the memory in the system.

The remainder of the paper is organized as follows. We briefly overview related work in the existing literature in Section \ref{sec:related work}. 
In Section \ref{sec:ls}, we study the particular case of least-squares loss \eqref{eq:linearls}, and give conditions under which the $\hat\vx_{t|T}$ converge rapidly as $T$ increases.  This allows us to control the error of a standard recursive least-squares algorithm when the updates are truncated.
Section~\ref{sec:cnvx case} extends these results where the $f_t$ are general (smooth and strongly) convex loss functions, with an online-like Newton-type algorithm for solving these problems presented in Section~\ref{sec:nwton alg}.
Numerical examples to support our theoretical results are given in Section \ref{sec:numerical example}.
Proofs are deferred to the appendices. Notation follows the standard convention. 


\subsection{Related work}
\label{sec:related work}


As more problems in science and engineering are being posed in the language of convex optimization, several new frameworks have been introduced that move away from batch solvers in static setting to optimization problems that change in time.  A recent survey \cite{simonetto2020time} details one such framework.  There the goal is to reconstruct a solution trajectory  
\begin{equation}
    \label{eq:timevarying}
    \vx^*(t) = \argmin_{\vx\in\R^n} f(\vx;t),
\end{equation}
given an ensemble of objective functions $f(\cdot;t)$ indexed by time $t$.  Research on problems of the type \eqref{eq:timevarying}, see in particular \cite{koppel2015target, fazlyab2017prediction, simonetto2016class}, is concerned with fast algorithms that take advantage of structural properties of the ensemble $f(\cdot;t)$ (e.g.\ that it varies slowly in time) to approximate $\hat\vx^*(t)$ in a manner that is much more efficient than if the optimization program was treated in isolation.  The goal is to achieve something like real-time tracking of the solution as $f(\cdot;t)$ evolves with $t$.

This framework is fundamentally different than the one in \eqref{eq:def:general problem formulation} that is studied in this paper.  In \eqref{eq:timevarying}, there is a single optimization variable, and at a fixed time $t$ the function $f(\cdot;t)$ provides all the information needed to compute the ideal optimal solution $x^*(t)$.  In contrast, the formulation in \eqref{eq:def:general problem formulation} adds a new optimization variable along with a new function as $T$ increases.  The solution $\hat{\vx}_{t|T}$ of \eqref{eq:def:general problem formulation} for frame $t$ changes as $T$ increases, and a key point of interest is the structural properties of the $f_t$ that result in the solutions settling quickly.  This is a question that is completely independent of any algorithm used to solve (or approximate the solution of) \eqref{eq:def:general problem formulation}.  This concept that the functions in the future affect the optimal solutions in the past does not exist for \eqref{eq:timevarying}.


Another formulation for time-varying optimization problems is known as \emph{online convex optimization} (OCO) \cite{zinkevich2003online, shalev2012online, hazan2016introduction}.  In a typical OCO problem, a player makes a prediction $\vx_t$, then suffers a loss $f_t(\vx_t)$.  The process repeats at the next time step, with the player (hopefully) improving their predictions as they adapt to the structure in the problem.  The effectiveness of an algorithm is measured using a regret function, a typical example of which is 
\[
	\mathrm{Regret}_T = \sum_{t=1}^Tf_t(\vx_t) - \min_{\vx}\sum_{t=1}^Tf_t(\vx).
\]     
This is, again, a very different framework than the one we are considering in  \eqref{eq:def:general problem formulation}. As mentioned above, we are introducing new optimization variables as the time index $T$ increases, and the fact that $f_t$ shares variables with $f_{t-1}$ and $f_{t+1}$ explicitly provides the dependence between the solutions.  Moreover, our Theorems~\ref{thm:convergence of the updates} and \ref{thm:bondcnvgrad} below are not tied to particular algorithms, they are concerned with how the solutions to  \eqref{eq:def:general problem formulation} change as $T$ increases.
Our analysis does not involve any notion of prediction, but the concept of ``smoothing'' (updating the variables at previous time steps) plays a central role.
Updating previous solutions as new information is revealed is not emphasized in the OCO literature.

The OCO framework has also been extended to more flexible regret models appropriate for dynamic environments; different types of adaptive regret are considered in \cite{hazan2007logarithmic,hall2013online}. 
These models do model optimization problems that themselves vary in time and whose solutions may be  coupled in some sense. 
However, these problems are again about making improved predictions, and no notion of smoothing exists.

Finally, and as we mentioned above, if the $f_t$ are least-squares functionals, our framework is equivalent to the celebrated Kalman filter \cite{kalman1960new} with backward smoothing \cite{bryson1963smoothing, rauch1965maximum}.
Convergence results in the literature primarily assume a generative stochastic model and are concerned with the convergence of the covariance of the $\vx_t$ and how it results in the asymptotic stability of the filter.  In \cite{bougerol1993kalman}, the authors show  convergence of the estimate error covariance by showing that the transformation that propagates this covariance across time steps is a contraction.  

Our main results in this context give conditions under which the smoothing updates converge at a linear rate.  While we do not know of comparable existing results in the literature, there is some qualitative discussion related to this convergence in \cite[Ch. 7]{moore1979optimal} that starts to draw a relation between the accuracy of the smoothing to the asymptotic behavior of the gain matrix and covariance matrix in terms of their eigenvalue structure.
The recent work \cite{aravkin2013kalman, aravkin2021algorithms} also treats Kalman smoothing as solving a block tridiagonal system, analogous to our approach to the least-squares setting in Section~\ref{sec:ls}, and show how the spectral properties of submatrices lead to numerical stability of the computations.
The work \cite{cao2003exponential} does show linear convergence of the estimate $\vx_{t|t}$ but only in the special case where the state is static.

The extended Kalman filter (EKF) \cite{cox1964estimation,sage1971estimation} is the classic way to extend this online tracking framework to problems that are nonlinear.  It works by linearizing the problem around the current estimate (thus approximating the original problem), then performing the updates the same way as in linear least-squares.  Our nonlinear framework is focused on solving the optimization program directly.

As a concluding remark, we want to re-emphasize that not only that our filtering optimization framework different from previous work on time-varying optimization problems, our main theoretical results are on intrinsic properties of the optimization problem and how its solution changes over time. Our Corollary~\ref{corr:ls:buffer error bound} and Theorem~\ref{thm:cnvxbufferrbnd}  provide bounds on the approximation error with truncated updates that are algorithm independent.  They can be interpreted as a bridge for existing algorithms to leverage these results; the truncated streaming least-squares algorithm in Section~\ref{sec:ls} and the truncated Newton algorithm in Section~\ref{sec:nwton alg} are two examples that do exactly that.


\section{Streaming Least-Squares } 
\label{sec:ls}
\noindent
In this section, we look at solving \eqref{eq:def:general problem formulation} in the particular case where the cost functions are regularized linear least-squares terms similar to \eqref{eq:linearls}:
\footnote{
Throughout $ \| \cdot \|$ refers to the $Euclidean$ norm vectors and its induced norm for matrices .
}
\begin{equation}\label{eq:flstik}
	f_t(\vx_{t-1},\vx_t) = \| \mB_t \vx_{t-1} + \mA_t \vx_t -\vy_t\|^2 + \gamma\|\vx_t\|^2.
\end{equation}
To ease the notation, we will fix the sizes of the variables $\vx_t\in\R^{n}$ and matrices $\mA_t,\mB_t\in\R^{m\times n}$ to be the same, but everything below is easy to generalize.  An important application, and one which is discussed further in Section~\ref{sec:numerical example} below, is streaming reconstruction from non-uniform samples of signals that are being represented by local overlapping basis elements.
%
There is an elegant way to solve this type of streaming least-squares problem that is essentially equivalent to the Kalman filter.  We will quickly review how this solution comes about below, as it will allow us to draw parallels to the case when the $f_t$ are general convex functions.

We begin by presenting a matrix formulation of the problem that expresses the minimizer as the solution to an inverse problem.
This formulation exposes the problem’s nice structure, leading to an efficient forward-backward LU factorization solver. 
We then discuss the stability of the factorization and show that it also leads to convergence of the updates (Theorem~\ref{thm:ls convergence}). Lastly, as a corollary of Theorem~\ref{thm:ls convergence}, we show that the updates can be truncated, allowing the streaming solution to operate with finite memory, with very little additional error.

\subsection{Matrix formulation}
\label{subsec:ls:matrix form} 
\noindent
With the $f_t$ as in \eqref{eq:flstik}, the optimization problem \eqref{eq:def:general problem formulation} is equivalent to solving a structured linear inverse problem.  At frame $T$, we are estimating the right-hand side of
\begin{equation*}\label{eq:PhiKT}
	\medmath{
	\underbrace{\begin{bmatrix}
		\vy_0 \\ \vy_1 \\ \vy_2 \\ \vy_3 \\ \vy_4 \\ \vdots \\ \vy_{T}
	\end{bmatrix}}_{\bvy_T}
	}
	=
	\medmath{
	\underbrace{\begin{bmatrix}
		\mA_0 & \mzero & \cdots & & & & \mzero \\
		\mB_1 & \mA_1 & \mzero & \cdots & & & \mzero\\
		\mzero & \mB_2 & \mA_2 & \mzero & \cdots & & \mzero\\
		\mzero & \mzero & \mB_3 & \mA_3 & \mzero & \cdots & \mzero \\
		\mzero & \mzero & \mzero & \mB_4 & \mA_4 & \cdots & \mzero \\
		\vdots & & & & \ddots & \ddots & \vdots \\
		\mzero & \cdots & & & \cdots & \mB_{T} & \mA_{T}
	\end{bmatrix}}_{\bmPhi_T}
	}
 	\medmath{
 	\underbrace{\begin{bmatrix} 
		\vx_0 \\ \vx_1 \\ \vx_2 \\ \vx_3 \\ \vx_4 \\ \vdots \\ \vx_{T}
	\end{bmatrix}}_{\bvx_T}
	}
	+ \medmath{\mathrm{noise}},
\end{equation*}

with a least-squares loss.  The minimizer of this (regularized) least-squares problem satisfies the normal equations
\begin{equation}
	\label{eq:normal eqs} 
	\hat{\bvx}_T = (\bmPhi^\T_T\bmPhi_T +\gamma\mId )^{-1}\bmPhi^\T\bvy_T.
\end{equation}
When $T$ is large, solving \eqref{eq:normal eqs} can be expensive or even infeasible. However, the structure of $\mPhibar_T$ (it has only two nonzero block diagonals) allows an efficient method for updating the solution as $T\rightarrow T+1$.

\subsection{Block-tridiagonal systems} 
\noindent
The key piece of structure that our analysis and algorithms take advantage of is that the system matrix $\bmPhi^\T_T\bmPhi_T +\gamma\mId$ in \eqref{eq:normal eqs} is block-tridiagonal.  In this section, we overview how this type of system can be solved recursively and introduce conditions that guarantee that these computations are stable.  We will use these results both to analyze the least-squares case and the more general convex case, where the Hessian matrix has the same block-tridiagonal structure.

Consider a general block-tridiagonal system of equations
\begin{equation}
    \label{eq:blktridiagsys}
    \begin{bmatrix}
	\mH_0 & \mE_0^\T & & & \\
	\mE_0 & \mH_1 & \mE_1^\T & & \\
	& \ddots & \ddots & \ddots & \\
	& & \mE_{T-2} & \mH_{T-1}& \mE_{T-1}^\T \\
	& & &			\mE_{T-1} & \mH_T
	\end{bmatrix}
	\begin{bmatrix} 
	    \vx_0 \\ \vx_1 \\ \vdots \\ \vx_{T-1} \\ \vx_T
	\end{bmatrix}
	=
	\begin{bmatrix} 
	    \vg_0 \\ \vg_1 \\ \vdots \\ \vg_{T-1} \\ \vg_T
	\end{bmatrix}
\end{equation}
There is a standard numerical linear algebra technique for calculating the (block) LU factorization of a block-banded matrix (see \cite[Chapter~4.5]{golub2012matrix}); the matrix above gets factored as
\begin{equation*}
	\label{eq:ls:LU fact}
	\medmath{
	\underbrace{
	\begin{bmatrix} 
		\mQ_0 & \mzero & \cdots & & \mzero \\ 
		\mE_0 & \mQ_1 & \mzero & & \\
		\mzero & \mE_1 & \mQ_2 & \ddots & \\ 
		\vdots & & \ddots & \ddots & \mzero \\ 
		\mzero & \cdots & \mzero & \mE_{T-1} & \mQ_T
	\end{bmatrix}
	}_{\mLbar_T} 
	\underbrace{
	\begin{bmatrix} 
		\mId & \mU_0 & \mzero & & \\
		\mzero & \mId & \mU_1 & \mzero & \\ 
		\vdots & & \ddots & \ddots & \\ & & & \ddots & \mU_{T-1} \\ 
		\mzero & & & \mzero & \mId 
	\end{bmatrix}
	}_{\mUbar_T} 
	}.
\end{equation*} 
The $\mE_t$ in \eqref{eq:ls:LU fact} are the same as in \eqref{eq:blktridiagsys} while the $\mQ_t$ and $\mU_t$ can be computed recursively using $\mQ_0 = \mH_0$, and then for $t=1,\ldots,T$
\begin{align}
	\mU_{t-1} &= \mQ_{t-1}^{-1}\mE_{t-1}^\T, \nonumber \\
	\mQ_t &= \mH_{t} - \mE_{t-1}\mU_{t-1} = \mH_{t} - \mE_{t-1}\mQ_{t-1}^{-1}\mE_{t-1}^\T. \label{eq:Qt}
\end{align}
This, in turn, gives us an efficient way to solve the system in \eqref{eq:blktridiagsys} using a forward-backward sweep: we start by initializing the ``forward variable'' as $\vv_0 = \mQ_0^{-1}\vg_0$, then move forward; for $t=1,\ldots,T$ we compute
\begin{equation}
    \label{eq:forwardsweep}
    \vv_t = \mQ_t^{-1}(\vg_t - \mE_{t-1}\vv_{t-1}).
\end{equation}
After computing $\vv_T$, we hold $\vvbar = \mLbar^{-1}\vgbar$ in our hands.  To compute $\hat\vxbar = \mUbar^{-1}\vvbar$,  
we first set $\hat\vx_T = \vv_T$ and then move backward; for $t=T-1,\ldots,0$ we compute
\begin{equation}
    \label{eq:backwardsweep}
    \hat\vx_t = \vv_t - \mU_t\hat\vx_{t+1}.
\end{equation}

This algorithm relies on the $\mQ_t$ being invertible for all $t$.  As these are defined recursively, it might, in general, be hard to determine their invertibility without actually computing them.  The lemma below, however, shows that if the system in \eqref{eq:blktridiagsys} is block diagonally dominant, in that the $\mE_t$ are smaller than the $\mH_t$, then well-conditioned $\mH_t$ will result in well-conditioned $\mQ_t$, and as a result, the reconstruction process above is well-defined and stable.
%
%
%
\begin{lemma}
	\label{lm:Qcond}
	Suppose that there exists a $\kappa\geq 1$ and $\delta, \theta$ such that for all $t\geq 0$, $\|\kappa^{-1}\mH_t-\mId\|\leq\delta$ and $\|\mE_t\|\leq\kappa\theta$  with $\delta < 1$ and $\theta \leq (1-\delta)/2$.  Then for all $t\geq 0$,
	\[
		\|\kappa^{-1}\mQ_t-\mId\|\leq\eps_*, \quad \eps_* = \frac{1+\delta}{2} - \sqrt{\frac{(1-\delta)^2}{4}-\theta^2} < 1.
	\]
\end{lemma}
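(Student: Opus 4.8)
The plan is to induct on $t$, tracking the scalar $\eps_t := \|\kappa^{-1}\mQ_t-\mId\|$ and showing $\eps_t\leq\eps_*$ for all $t\geq 0$. The driving observation is that the map $\phi(\eps)=\delta+\theta^2/(1-\eps)$ is increasing on $[0,1)$ and that $\eps_*$ is precisely its smaller fixed point: one checks directly that $\eps=\delta+\theta^2/(1-\eps)$ is the quadratic $\eps^2-(1+\delta)\eps+(\delta+\theta^2)=0$, whose discriminant $(1-\delta)^2-4\theta^2$ is nonnegative exactly because $\theta\leq(1-\delta)/2$, and whose smaller root is the displayed $\eps_*$. Moreover $\eps_*<1$ since $\tfrac{1+\delta}{2}<1$ (as $\delta<1$) and the square root is nonnegative, and $\eps_*\geq\delta$ since the square root is at most $\tfrac{1-\delta}{2}$ (squaring reduces this to $0\geq-\theta^2$).

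\emph{Base case.} Since $\mQ_0=\mH_0$, we have $\eps_0=\|\kappa^{-1}\mH_0-\mId\|\leq\delta\leq\eps_*$ by the hypothesis on $\mH_0$ and the remark above. \emph{Inductive step.} Suppose $\eps_{t-1}\leq\eps_*<1$. Then $\|\mId-\kappa^{-1}\mQ_{t-1}\|\leq\eps_*<1$, so $\kappa^{-1}\mQ_{t-1}$ is invertible by the Neumann series with $\|(\kappa^{-1}\mQ_{t-1})^{-1}\|\leq(1-\eps_*)^{-1}$, hence $\|\mQ_{t-1}^{-1}\|\leq[\kappa(1-\eps_*)]^{-1}$. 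From the recursion \eqref{eq:Qt}, $\kappa^{-1}\mQ_t-\mId=(\kappa^{-1}\mH_t-\mId)-\kappa^{-1}\mE_{t-1}\mQ_{t-1}^{-1}\mE_{t-1}^{\T}$, so by the triangle inequality, submultiplicativity, and $\|\mE_{t-1}\|\leq\kappa\theta$,
\[
\eps_t\;\leq\;\delta+\kappa^{-1}\|\mE_{t-1}\|^2\|\mQ_{t-1}^{-1}\|\;\leq\;\delta+\frac{\kappa^{-1}(\kappa\theta)^2}{\kappa(1-\eps_*)}\;=\;\delta+\frac{\theta^2}{1-\eps_*}\;=\;\eps_*,
\]
the last equality being the fixed-point identity. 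This closes the induction. (Equivalently, one may keep the sharper bound $\eps_t\leq\phi(\eps_{t-1})$ and conclude $\eps_t\leq\eps_*$ from monotonicity of $\phi$ on $[0,1)$ together with $\phi(\eps_*)=\eps_*$ and $\eps_0\leq\eps_*$.)

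The only genuinely delicate point is the algebraic bookkeeping: verifying that the stated $\eps_*$ is the relevant fixed point of $\phi$, and that the two hypotheses $\delta<1$ and $\theta\leq(1-\delta)/2$ are exactly what is needed for $\eps_*$ to be real, to satisfy $\eps_*<1$, and to dominate $\delta$ so that the base case goes through. Everything else is the standard Neumann-series perturbation estimate applied to the Schur-complement recursion, and no symmetry of the blocks is required.
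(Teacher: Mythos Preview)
Your proof is correct and follows essentially the same approach as the paper: both bound $\|\kappa^{-1}\mQ_t-\mId\|$ by the scalar recursion $\epsilon_t=\delta+\theta^2/(1-\epsilon_{t-1})$ with $\epsilon_0=\delta$, and use that this monotone sequence is dominated by the fixed point $\eps_*$ of $\phi(\eps)=\delta+\theta^2/(1-\eps)$. You simply spell out more of the details (the Neumann-series invertibility, the quadratic algebra for $\eps_*$, and the check that $\delta\leq\eps_*$) that the paper leaves implicit.
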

\begin{proof}
	From the recursion that defines the $\mQ_t$ in \eqref{eq:Qt} we have $\|\kappa^{-1}\mQ_t-\mId\|\leq \epsilon_t$, where $\{\epsilon_t\}$ is a sequence that obeys
	\[
		\epsilon_0=\delta, \quad \epsilon_t = \delta + \frac{\theta^2}{1-\epsilon_{t-1}}.
	\]
	For the $\theta,\delta$ in the lemma statement, these $\epsilon_t$ form a monotonically nondecreasing sequence that converges to $\eps_*$.
\end{proof}

One of the main consequences of Lemma~\ref{lm:Qcond}, and a result that we will use for our least-squares and more general convex analysis, is that a uniform bound on the size of the blocks $\|\vg_t\|$ on the right-hand side of \eqref{eq:blktridiagsys} implies a uniform bound on the size of the blocks in the solution $\|\hat\vx_t\|$.

\begin{lemma}
	\label{lm:xtbound}
	Suppose that the conditions of Lemma~\ref{lm:Qcond} hold.  Suppose also that we have a uniform bound on the norm of each of the blocks of $\bvg$, $\|\vg_t\|\leq M$.  Then if $\rho = \theta/(1-\eps_*) < 1$,
	\[
		\|\xhat_{t|T}\|~\leq~\frac{M(1-\rho^{T-t+1})}{(1-\eps_*)(1-\rho)^2} ~\leq~ \frac{M}{(1-\eps_*)(1-\rho)^2}.
	\]
\end{lemma}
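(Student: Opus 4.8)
The plan is to turn the conditioning estimate of Lemma~\ref{lm:Qcond} into operator-norm bounds on the factors $\mQ_t^{-1}$ and $\mU_t$ that drive the forward-backward sweep \eqref{eq:forwardsweep}--\eqref{eq:backwardsweep}, and then to chase two nested geometric series. First I would observe that $\|\kappa^{-1}\mQ_t-\mId\|\leq\eps_*<1$ makes $\kappa^{-1}\mQ_t$ invertible with $\|(\kappa^{-1}\mQ_t)^{-1}\|\leq 1/(1-\eps_*)$ by a Neumann-series argument, hence $\|\mQ_t^{-1}\|\leq 1/(\kappa(1-\eps_*))$. Combining this with $\|\mE_t\|\leq\kappa\theta$ gives $\|\mU_{t-1}\| = \|\mQ_{t-1}^{-1}\mE_{t-1}^\T\|\leq \theta/(1-\eps_*) = \rho$ and, because transposition does not change the spectral norm, the same bound $\|\mQ_t^{-1}\mE_{t-1}\|\leq\rho$; by hypothesis $\rho<1$.

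Next I would bound the forward variables. From $\vv_0=\mQ_0^{-1}\vg_0$ and $\vv_t=\mQ_t^{-1}(\vg_t-\mE_{t-1}\vv_{t-1})$, together with $\|\vg_t\|\leq M$, one gets $\|\vv_t\|\leq M/(\kappa(1-\eps_*)) + \rho\|\vv_{t-1}\|$. Unrolling this recursion and summing the geometric series yields a bound on $\|\vv_t\|$ that is uniform in $t$, namely $\|\vv_t\|\leq \frac{M}{\kappa(1-\eps_*)}\sum_{j\geq0}\rho^j = \frac{M}{\kappa(1-\eps_*)(1-\rho)}$.

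Finally I would run the backward recursion $\hat\vx_T=\vv_T$, $\hat\vx_t=\vv_t-\mU_t\hat\vx_{t+1}$: expanding it gives $\hat\vx_t = \sum_{j=0}^{T-t}(-1)^j\mU_t\cdots\mU_{t+j-1}\vv_{t+j}$, so $\|\hat\vx_t\|\leq\sum_{j=0}^{T-t}\rho^j\|\vv_{t+j}\|$. Substituting the uniform bound on the $\vv_{t+j}$ and summing the finite geometric series produces $\|\xhat_{t|T}\|\leq \frac{M(1-\rho^{T-t+1})}{\kappa(1-\eps_*)(1-\rho)^2}$; since $\kappa\geq1$ the factor $1/\kappa$ may be dropped, and bounding $1-\rho^{T-t+1}\leq1$ gives the second inequality.

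There is no genuine obstacle here; the only thing requiring care is keeping the two geometric sums straight — the first over the forward index, which must be carried out to its infinite limit to obtain a $t$-uniform bound on $\|\vv_t\|$, and the second over the backward index $j=0,\dots,T-t$, whose partial sum supplies the $(1-\rho^{T-t+1})$ factor — and remembering that $\|\mU_t\|$ and $\|\mQ_t^{-1}\mE_{t-1}\|$ are governed by the same constant $\rho$ despite the transpose in the definition of $\mU_t$.
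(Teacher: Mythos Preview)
Your proposal is correct and follows essentially the same route as the paper: bound $\|\mQ_t^{-1}\|$ and $\|\mU_t\|$ via Lemma~\ref{lm:Qcond}, run one geometric series through the forward sweep to get a uniform bound on $\|\vv_t\|$, then a second through the backward sweep to obtain the stated estimate. The only cosmetic differences are that the paper drops the favorable factor $1/\kappa$ at the outset (using $\|\mQ_t^{-1}\|\leq 1/(1-\eps_*)$ directly since $\kappa\geq1$) rather than at the end, and it phrases both sweeps as the recursive inequality $z_t\leq b+az_{t-1}$ instead of your explicit telescoped sum for the backward step.
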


The proof of Lemma~\ref{lm:xtbound}, which we present in Appendix~\ref{sec:xtboundproof}, essentially just traces the steps through the forward-backward sweep making judicious use of the triangle inequality.  Later on, we will see that our ability to  bound the solution on a frame-by-frame basis plays a key role in showing that the streaming solutions to \eqref{eq:def:general problem formulation} converge as $T$ increases.

From general linear algebra, we have the following result that will be of use later on.
\begin{lemma}\label{lem:sparseb3dsystake2}
	Consider the $(m+n) \times (m+n)$ system 
	\begin{equation*}\label{eq:lemblocksysy}
		\left[ \begin{array}{c|ccc}
			\mA &   & \mV^T &  \\ \hline
			& \ & \     & \ \\
			\mV   & \ & \mB   &   \\
			\     & \ & \     &
		\end{array}
		\right]
		\begin{bmatrix}
			\vh_0\\
			\hline 
			\\
			\vy \\
			\\
		\end{bmatrix} 
		=
		\begin{bmatrix}
			\vq_0\\
			\hline 
			\\
			\mzero \\
			\\
		\end{bmatrix},
	\end{equation*}
	with matrices $ \mA\in \R^{m\times m} $, $\mB\in \R^{n\times n} $, $ \mV\in \R^{n\times m} $, and vectors $\vh_0,~\vq_0\in R^m  $ and $ \vy=(\vy_1,\dots,\vy_n)\in\R^n $.	
	Suppose that $ \|\mV\| \leq \alpha $, and $\mB$ is nonsingular with $\|\mB^{-1}\|\leq \beta$. Then
	$
	\|\vy\| \leq {\alpha}{\beta}\|\vh_0\|
	$, and in particular, 
	$ 	\|\vy_i\|\leq {\alpha}{\beta}\|\vh_0\|.$ 
\end{lemma}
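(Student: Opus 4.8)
The plan is to simply read off the second block row of the linear system and solve for $\vy$ explicitly. The bottom block of equations reads $\mV\vh_0 + \mB\vy = \mzero$, so since $\mB$ is nonsingular we may write $\vy = -\mB^{-1}\mV\vh_0$. (Note that the top block row and the value of $\vq_0$ play no role whatsoever; only the structure of the right-hand side, namely that its lower block is zero, matters.)

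From here the bound is immediate from submultiplicativity of the Euclidean operator norm: $\|\vy\| = \|\mB^{-1}\mV\vh_0\| \leq \|\mB^{-1}\|\,\|\mV\|\,\|\vh_0\| \leq \beta\alpha\|\vh_0\|$, using the hypotheses $\|\mV\|\leq\alpha$ and $\|\mB^{-1}\|\leq\beta$. For the per-block statement, each $\vy_i$ is a subvector of $\vy$, so $\|\vy_i\|\leq\|\vy\|\leq\alpha\beta\|\vh_0\|$.

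There is essentially no obstacle here; the lemma is a one-line consequence of block elimination, and the only thing to be careful about is the orientation of the norms (matrix-induced versus vector norms, both Euclidean, as fixed by the footnote) and the fact that extracting a subvector does not increase the $\ell_2$ norm. I would present it in three short sentences corresponding to the three points above.
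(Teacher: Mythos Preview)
Your argument is correct and is exactly the intended one: the paper states this lemma as a basic linear-algebra fact without supplying a proof, and the one-line block elimination $\vy=-\mB^{-1}\mV\vh_0$ followed by submultiplicativity is precisely what is implicit. There is nothing to add.
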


Lemma \ref{lem:sparseb3dsystake2} shows there is a simple relation between the first and remaining elements of the solution for the particular right-hand side above.
\subsection{Streaming solutions} 
\label{subs:ls:sol update}
\noindent
The forward-backward algorithm for solving block-tridiagonal systems can immediately be converted into a streaming solver for problems of the form \eqref{eq:flstik}.  Indeed, the algorithm that does this, detailed explicitly as Algorithm~\ref{alg:Dynamic strm LS} below, mirrors the steps in the classic Kalman filter exactly (with the backtracking updates akin to ``smoothing'').

For fixed $T$, the least-squares problem in \eqref{eq:flstik} amounts to a tridiagonal solve as in \eqref{eq:blktridiagsys} with 
\begin{align*}
	&\mE_t    = \mA_{t+1}^\T\mB_{t+1},\\
	&\mH_{t}  =  \mA_t^\T\mA_t + \mB_{t+1}^\T\mB_{t+1} + \gamma\mId,  \quad t=0,\dots,T-1 ~,\\
	&\mH_{T}=	\mA_T^\T\mA_T  +\gamma\mId,\\
	& \vg_t = \mA_t \vy_t + \mB_{t+1}\vy_{t+1}, \qquad t=0,\dots,T-1,\\
	& \vg_T =\mA_T \vy_T
	.
\end{align*}
If we have computed the solution $\hat\vxbar_T = \{\hat\vx_{t|T}\}_{t=0}^T$, then when $T$ is incremented, we can move to the new solution $\hat\vxbar_{T+1}$ by updating the $\mQ_T$ matrix, computing the new terms $\mE_{t-1},\mU_{t-1}$ in the block LU factorization (thus completing the forward sweep in one additional step), computing $\hat\vx_{T+1|T+1}$, then sweeping backward to update the solution by computing the $\hat\vx_{t|T+1}$ for $t=T,\ldots,0$.

\begin{algorithm}\caption{Streaming Least-squares } 
	\label{alg:Dynamic strm LS}
	\alglanguage{pseudocode} 
	\begin{algorithmic}
		\State $[\vy_0,\mA_0]\gets \mathrm{GetSampleBatch}(0)$
		\State $\mQ_0' \gets \mA_0^\T\mA_0 + \gamma\mId ~~ $
		\State $\vg_0' \gets \mA_0^\T\vy_0$
		\State $\hat\vx_0 \gets \mQ_0^{'-1}\vg_0'$
		\For{$t=1,2,\ldots$}
		\State $[\vy_t,\mA_t,\mB_t]\gets\mathrm{GetSampleBatch}(t)$
		\State $\mQ_{t-1}\gets \mQ_{t-1}' + \mB_t^\T\mB_t  $
		\State $\vg_{t-1}\gets \vg_{t-1}' + \mB_t^\T\vy_t$
		\State $\vv_{t-1}\gets \mQ_{t-1}^{-1}(\vg_{t-1} - \mE_{t-2}\vv_{t-2})$
		\State $\mE_{t-1}\gets \mA_t^\T\mB_t$
		\State $\mU_{t-1}\gets \mQ_{t-1}^{-1}\mE_{t-1}^\T$
		\State $\mQ_t'\gets \mA_t^\T\mA_t - \mE_{t-1}\mU_{t-1} + \delta\mId$
		\State $\vg_t'\gets \mA_t^\T\vy_t$
		\State $\hat\vx_{t|t}\gets \mQ_t^{'-1}(\vg_t'-\mE_t\vv_t)$
		\For{$\ell=1,\ldots,t$ }
		\State $\hat\vx_{t-\ell|t}\gets \vv_{t-\ell} - \mU_{t-\ell}\hat\vx_{t-\ell+1|t}$
		\EndFor
		\EndFor
	\end{algorithmic}
\end{algorithm}
We now have the natural questions: under what conditions does an  $\vx_t^\star$  exist such that $\hat\vx_{t|T}\rightarrow\vx_t^\star$, and if one exists, how fast do the solutions converge? 
Our first theorem provides one possible answer to these questions.  At a high level, it says that while the solution in every frame changes as we go from $t\rightarrow T+1$, this effect is {\em local} if the block-tridiagonal system is block diagonally dominant.  Well-conditioned $\mH_t$ and relatively small $\mE_t$ result in rapid (linear) convergence of $\hat\vx_{t|T}\rightarrow\vx_t^\star$.

\begin{theorem}
\label{thm:ls convergence}
Suppose that the $\mH_t$ and $\mE_t$ generated by Algorithm~\ref{alg:Dynamic strm LS} obey the conditions of Lemma~\ref{lm:Qcond} and $\theta< 1-\epsilon_*$.  Suppose also that the size of the $\vy_t$ are bounded as
\[
    M_y = \sup_{t\geq 0} \left\|\begin{bmatrix} \vy_t \\ \vy_{t+1} \end{bmatrix}\right\|. 
\] 
Then there exists $\vx_{t}^*$ such that 
\begin{equation*} \label{eqn:ls asym sol}
	\hat\vx_{t|T} \rightarrow  \vx_{t}^* \quad\text{as}\quad t\leq T\rightarrow\infty,
\end{equation*} 
and there is a constant $C(\epsilon_*,\theta,\delta)$ such that
\begin{equation*}\label{eq: cnvx: thm upd conv res}
	 \|\hat\vx_{t|T} -  \vx_{t}^*\| ~\leq~ C(\epsilon_*,\theta,\delta) ~
M_y ~ \left(\frac{\theta}{1-\epsilon_*}\right)^{T-t} 
\end{equation*}
 for all $  t\leq T $.
\end{theorem}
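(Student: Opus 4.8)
The plan is to treat the step $T\to T+1$ as a perturbation of the frame-$T$ problem that is supported only in its last block, and to exploit the block-tridiagonal LU structure to show that the induced change in the solution decays geometrically in $T-t$; summing these changes over $T$ then yields both the limit $\vx^*_t$ and the stated rate. After a harmless rescaling of the normal equations by $\kappa^{-1}$ — which does not change the minimizer and only shrinks the data bound $M_y$ by a factor $\kappa^{-1/2}\le1$ — I may assume $\kappa=1$, so that $\|\mH_t-\mId\|\le\delta$, $\|\mE_t\|\le\theta$, and hence, by Lemma~\ref{lm:Qcond}, $\|\mQ_t-\mId\|\le\epsilon_*$, giving $\|\mQ_t^{-1}\|\le(1-\epsilon_*)^{-1}$ and $\|\mU_t\|=\|\mQ_t^{-1}\mE_t^\T\|\le\theta/(1-\epsilon_*)=:\rho<1$, where $\rho<1$ uses the hypothesis $\theta<1-\epsilon_*$. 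In this normalization $\|\mA_t\|^2,\|\mB_t\|^2\le1+\delta$ (each of $\mA_t^\T\mA_t$, $\mB_t^\T\mB_t$ is dominated by some $\mH_s\preceq(1+\delta)\mId$) and $\sup_t\|\vg_t\|\le\sqrt{2(1+\delta)}\,M_y$, so Lemma~\ref{lm:xtbound} provides a uniform per-frame bound $\|\hat{\vx}_{t|T}\|\le C_x M_y$ for all $t\le T$, with $C_x=C_x(\epsilon_*,\theta,\delta)$.

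Next I would write the frame-$T$ and frame-$(T+1)$ normal equations — both instances of the block-tridiagonal system \eqref{eq:blktridiagsys} with the least-squares blocks — and subtract. Only the last diagonal block and last data block change when $T\to T+1$ (from $\mA_T^\T\mA_T+\gamma\mId$ to $\mA_T^\T\mA_T+\mB_{T+1}^\T\mB_{T+1}+\gamma\mId$, and from $\mA_T^\T\vy_T$ to $\mA_T^\T\vy_T+\mB_{T+1}^\T\vy_{T+1}$), so the restriction to frames $0,\dots,T$ of the difference $\mathbf d_t:=\hat{\vx}_{t|T+1}-\hat{\vx}_{t|T}$ solves the \emph{frame-$T$} block-tridiagonal system with a right-hand side whose only nonzero block is the last one, equal to
\[
\mathbf r=\mB_{T+1}^\T\bigl(\vy_{T+1}-\mB_{T+1}\hat{\vx}_{T|T+1}-\mA_{T+1}\hat{\vx}_{T+1|T+1}\bigr).
\]
Using $\|\mA_{T+1}\|,\|\mB_{T+1}\|\le\sqrt{1+\delta}$ together with the uniform solution bound of Lemma~\ref{lm:xtbound}, the parenthesized residual has norm at most a constant (depending only on $\epsilon_*,\theta,\delta$) times $M_y$, so $\|\mathbf r\|\le C_r M_y$ with $C_r=C_r(\epsilon_*,\theta,\delta)$.

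I would then run the forward--backward sweep \eqref{eq:forwardsweep}--\eqref{eq:backwardsweep} on this difference system. Since the first $T$ blocks of its right-hand side vanish, the forward pass gives $\vv_0=\dots=\vv_{T-1}=\mzero$ and $\vv_T=\mQ_T^{-1}\mathbf r$, and the backward pass gives $\mathbf d_T=\vv_T$ and $\mathbf d_t=-\mU_t\mathbf d_{t+1}$ for $t<T$, i.e.\ $\mathbf d_t=(-1)^{T-t}\mU_t\mU_{t+1}\cdots\mU_{T-1}\vv_T$; hence
\[
\bigl\|\hat{\vx}_{t|T+1}-\hat{\vx}_{t|T}\bigr\|=\|\mathbf d_t\|\le\rho^{\,T-t}\,\|\mQ_T^{-1}\|\,\|\mathbf r\|\le\frac{C_rM_y}{1-\epsilon_*}\Bigl(\frac{\theta}{1-\epsilon_*}\Bigr)^{T-t}.
\]
Finally, for $T'>T\ge t$ I would telescope, $\hat{\vx}_{t|T'}-\hat{\vx}_{t|T}=\sum_{s=T}^{T'-1}(\hat{\vx}_{t|s+1}-\hat{\vx}_{t|s})$, and sum the geometric series to get $\|\hat{\vx}_{t|T'}-\hat{\vx}_{t|T}\|\le\frac{C_rM_y}{(1-\epsilon_*)(1-\rho)}\rho^{T-t}$. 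This shows $\{\hat{\vx}_{t|T}\}_{T\ge t}$ is Cauchy, so $\vx^*_t:=\lim_{T\to\infty}\hat{\vx}_{t|T}$ exists, and letting $T'\to\infty$ gives the asserted bound with $C(\epsilon_*,\theta,\delta)=C_r/\bigl((1-\epsilon_*)(1-\rho)\bigr)$.

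The main obstacle is the second step: recognizing precisely that incrementing $T$ is a perturbation supported only on the last block, extracting $\mathbf r$ cleanly, and — the real crux — bounding $\|\mathbf r\|$ by $O(M_y)$ \emph{uniformly in $T$}, which is exactly what the frame-by-frame solution bound of Lemma~\ref{lm:xtbound} supplies. Everything afterward is the routine geometric estimate $\|\mathbf d_t\|\le\prod_{s=t}^{T-1}\|\mU_s\|\cdot\|\vv_T\|$ together with the triangle inequality.
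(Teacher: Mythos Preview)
Your proof is correct and follows essentially the same route as the paper's: both establish the one–step contraction $\hat\vx_{t-1|T+1}-\hat\vx_{t-1|T}=-\mU_{t-1}(\hat\vx_{t|T+1}-\hat\vx_{t|T})$ (you obtain it by running the forward--backward sweep on the difference system, the paper by directly subtracting the backward recursions and noting that $\vv_{t-1},\mU_{t-1}$ are unchanged for $t\le T$), then bound the starting discrepancy $\|\hat\vx_{T|T+1}-\hat\vx_{T|T}\|$ uniformly in $T$ via Lemma~\ref{lm:xtbound}, and finally telescope. The only substantive difference is that you bound this starting term through the explicit residual $\mathbf r$, whereas the paper uses the cruder triangle inequality $\|\hat\vx_{T|T+1}\|+\|\hat\vx_{T|T}\|$; your version yields a slightly sharper constant but both appeal to the same lemma.
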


The purpose of $\kappa$ in the theorem statement above is to make the result scale-invariant; a natural choice is to take $\kappa$ as the average of the largest and smallest eigenvalues of the matrices along the main block diagonal. 

Finally, we note that the condition that $ \theta<1-\eps_*$ is closely related to asking the system matrix to be (strictly) block diagonally dominant\footnote{We are referring to the block diagonal dominance defined in \cite[Definition 1]{feingold1962block} as $\lambda_{\mathrm{min}}(\mA_{j,j}) \geq \sum_{\substack{k=1 \\ k \neq j}}^{N}\left\|A_{j, k}\right\|$, where $\lambda_{\mathrm{min}}(\cdot)$ returns the smallest eigenvalue.}. We could guarantee block diagonal dominance by asking the smallest eigenvalue of each $\mH_t$ is larger than $\|\mE_{t-1}\|+\|\mE_t\|$, which we can ensure by taking $ \theta/(1-\delta) < 1/2$.
In the next theorem, we show that the exponential convergence of the least-squares estimate in Theorem~\ref{thm:ls convergence} allows us to stop updating $\hat\vx_{t|T}$ once $T-t$ gets large enough with almost no loss in accuracy.

\subsection{Truncating the updates}
\label{subsec:ls:trunc strm}
\noindent
The result of Theorem~\ref{thm:ls convergence} that the updates exhibit exponential convergence suggests that we might be able to ``prune'' the updates by terminating the backtracking step early.  In this section, we formalize this by bounding the additional error if we limit ourselves to a buffer of size $B$.  This allows the algorithm to truly run online, as the memory and computational requirements remain bounded (proportional to $B$).


To produce the truncated result, we use a simple modification of Algorithm~\ref{alg:Dynamic strm LS}.  The forward sweep remains the same (and exact), while the backtracking loop (the inner 'for' loop at the end) only updates the $B$ most recent frames, stopping at $\ell=B-1$:

\alglanguage{pseudocode} 
\begin{algorithmic}[0]
    	\For{$\ell=1,\ldots,B-1$ } 
		\State $\hat\vz_{t-\ell|t}\gets \vv_{t-\ell} - \mU_{t-\ell}\hat\vz_{t-\ell+1|t}$
		\EndFor
\end{algorithmic}
To avoid confusion, we have written the truncated solutions as $\hat\vz_{t|t'}$, and we will use $\vz_t^\star$ to be the ``final'' value $\vz_t^\star = \hat\vz_{t+1|t+B}$.  Note that to perform the truncated update, we only have to store the matrices $\mU_t$ and the vectors $\vv_t$ for $B$ steps in the past.  The schematic diagram in Figure~\ref{fig:streamingwithbuffer} illustrates the architecture and dynamical flow of the algorithm.

\begin{figure}[!tbh]
	\centering
	\includegraphics[width=1.0\linewidth]{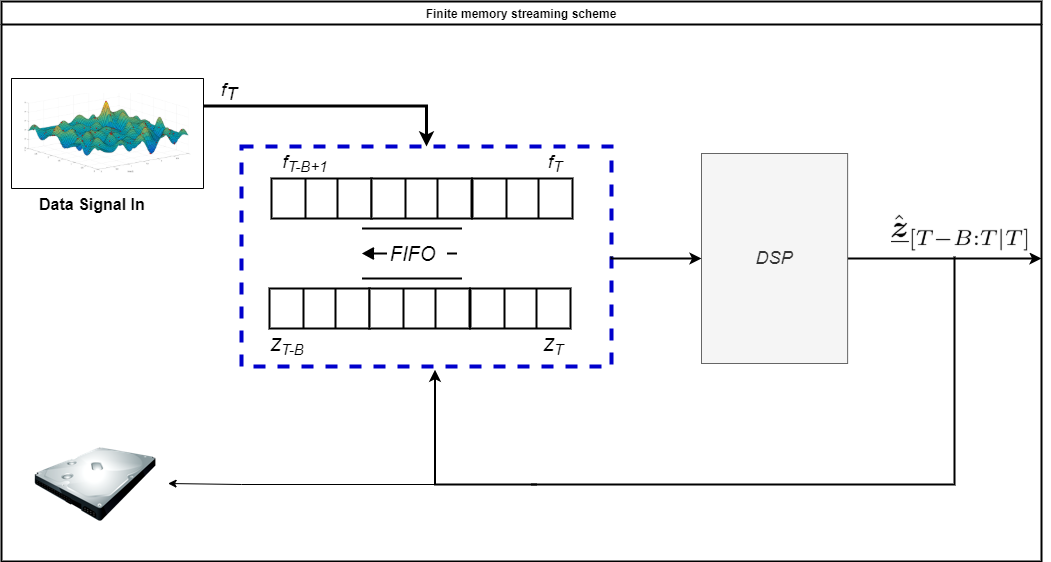}
	\caption{Schematic diagram of streaming with finite memory data architecture: The buffers pertain to fast memory which is assumed limited and hence used only to store the last $B$ loss functions and the latest corresponding estimated solution variables,
		$\{ \xfhat_{t|T} \}_t, t=T-B,\dots ,T$. After each solve, we have feedback to the buffer and as a possible implementation,  $\xfhat_{T-B|T}$ is offloaded to peripheral memory to keep track of the entire solution's trajectory. }
	\label{fig:streamingwithbuffer}
\end{figure}

The following corollary shows that the effect of the buffer is mild: the error in the final answer decreases exponentially in buffer size.


\begin{corollary} 
    \label{corr:ls:buffer error bound} 
	Let $ \{\vx_t^*\}$ denote the sequences of untruncated asymptotic solutions, and  $\{\vz^*_t\} $ the final truncated solutions for a buffer size $B$. Under the conditions of Theorem~\ref{thm:ls convergence}, we have
	\begin{equation*}
	\label{eq:ls truncation err bnd}
	 \| \vx_t^* - \xfstar_t \| \leq  C(\epsilon_*,\theta,\delta) 
	 M_y  \left(\frac{\theta}{1-\epsilon_*}\right)^{B} 
	\end{equation*}
	for all $0\leq t\leq T-B $.
\end{corollary}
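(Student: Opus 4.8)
The plan is to derive Corollary~\ref{corr:ls:buffer error bound} by comparing two nearby runs of the backward sweep and invoking the triangle inequality, using Theorem~\ref{thm:ls convergence} as a black box. The key observation is that the truncated final value $\vz_t^\star = \hat\vz_{t+1|t+B}$ differs from the true value $\vx_t^\star$ only because (i) at frame $T = t+B$ the untruncated solver uses $\hat\vx_{t|T}$ instead of $\vx_t^\star$, and (ii) after frame $t+B$ the untruncated solver keeps correcting frame $t$ while the truncated one does not. So I would write
\begin{equation*}
	\|\vx_t^\star - \vz_t^\star\| \;\leq\; \|\vx_t^\star - \hat\vx_{t|t+B}\| \;+\; \|\hat\vx_{t|t+B} - \vz_t^\star\|,
\end{equation*}
and bound the first term directly by Theorem~\ref{thm:ls convergence} with $T - t = B$, which yields $C(\epsilon_*,\theta,\delta)\,M_y\,(\theta/(1-\epsilon_*))^B$.

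The real work is showing the second term is zero, or at least no larger than the same quantity. The forward sweep is exact and identical in both algorithms, so $\vv_{t-\ell}$ and $\mU_{t-\ell}$ agree. At frame $t' = t+B$ the untruncated backward recursion computes $\hat\vx_{t|t'}$ by running $\hat\vx_{s|t'} = \vv_s - \mU_s\hat\vx_{s+1|t'}$ all the way down from $s = t'-1$ to $s = t$, i.e.\ $B$ steps of backtracking ending exactly at frame $t$. The truncated algorithm at the same frame $t'=t+B$ runs the inner loop for $\ell = 1,\ldots,B-1$, i.e.\ it also ends at frame $t'-(B-1) = t+1$, having computed $\hat\vz_{t+1|t'}$; and by definition $\vz_t^\star = \hat\vz_{t+1|t+B}$. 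Since the recursion's starting point at frame $t'$ is $\hat\vx_{t'|t'} = \vv_{t'}$ (the same forward variable in both), and all $\vv$'s and $\mU$'s match, the truncated iterates $\hat\vz_{s|t'}$ coincide with the untruncated $\hat\vx_{s|t'}$ for $s = t', t'-1,\ldots,t+1$. Hence $\hat\vz_{t+1|t+B} = \hat\vx_{t+1|t+B}$ --- but I actually want to compare with $\hat\vx_{t|t+B}$, so I should be careful: the relevant identity is really that $\vz_t^\star$, being $\hat\vz_{t+1|t+B}$, equals $\hat\vx_{t+1|t+B}$, and then I bound $\|\vx_t^\star - \vz_t^\star\|$ by $\|\vx_t^\star - \vx_{t+1}^\star\| + \|\vx_{t+1}^\star - \hat\vx_{t+1|t+B}\|$; the second term is $O((\theta/(1-\epsilon_*))^{B-1})$ by Theorem~\ref{thm:ls convergence}, and absorbing constants gives the claimed bound with exponent $B$. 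Alternatively, and more cleanly matching the stated exponent, I would reinterpret $\vz_t^\star$ directly as $\hat\vx_{t|t+B}$ if the paper's indexing of the buffer is such that ``$B$ most recent frames'' includes frame $t$ itself; in that case the second term vanishes identically and the bound is immediate from Theorem~\ref{thm:ls convergence}.

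The step I expect to be the main obstacle is precisely this bookkeeping: pinning down exactly which frame the truncated backward loop terminates at relative to the definition $\vz_t^\star = \hat\vz_{t+1|t+B}$, so that the telescoping comparison lands on the right power of $\theta/(1-\epsilon_*)$ and the constant $C(\epsilon_*,\theta,\delta)$ can be reused verbatim. Everything else is a one-line triangle inequality plus a direct citation of Theorem~\ref{thm:ls convergence}; no new estimates on the $\mQ_t$, $\mU_t$, or forward variables are needed, since the forward sweep being untouched means the truncation error is entirely a ``missing tail of the backward recursion'' phenomenon, and that tail is governed by the same geometric rate already established.
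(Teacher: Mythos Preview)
Your second, ``alternative'' reading is exactly the paper's proof: the forward sweep is identical in both algorithms, so for every frame the truncated backward loop actually touches we have $\hat\vz_{t|T}=\hat\vx_{t|T}$; hence the final truncated value for frame $t$ is literally $\hat\vx_{t|t+B}$ (up to the paper's own off-by-one in the displayed definition of $\vz_t^\star$), and a single application of Theorem~\ref{thm:ls convergence} with $T-t=B$ finishes.  That is all the paper does.

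Your first branch, however, should be discarded.  Taking $\vz_t^\star=\hat\vz_{t+1|t+B}$ at face value and then bounding $\|\vx_t^\star-\vz_t^\star\|$ via $\|\vx_t^\star-\vx_{t+1}^\star\|+\|\vx_{t+1}^\star-\hat\vx_{t+1|t+B}\|$ is not a valid route: $\vx_t^\star$ and $\vx_{t+1}^\star$ are the limiting solutions for \emph{different} frames, and their difference is not controlled by any geometric factor in $B$ (it is $O(M_y)$ at best).  The expression $\hat\vz_{t+1|t+B}$ in the paper is a typographical slip; the intended final value is the last update of frame $t$ itself, which your analysis of the inner loop correctly identifies as $\hat\vz_{t|t+B-1}$ (or $\hat\vz_{t|t+B}$ depending on whether one counts the $\ell=0$ step), and that equals the corresponding $\hat\vx$.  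So drop the triangle-inequality detour through frame $t+1$ and keep only the clean argument.
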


The corollary is established simply by realizing that $\hat\vz_{t|T}=\hat\vx_{t|T}$ for $T=t,\ldots,t+B$ and then applying Theorem~\ref{thm:ls convergence}.
Section \ref{subsec:lvl cros} demonstrates that for a practical application, excellent accuracy can be achieved with a modest buffer size.  There we look at the problem of signal reconstruction from non-uniform level crossings.

Our truncated least-squares algorithm is functionally the same  as the fixed-lag Kalman smoother \cite{moore1973fixed} and its variants \cite{aravkin2021algorithms}.  Corollary~\ref{corr:ls:buffer error bound} gives us principled guidance on the choice of window size for these algorithms.  It applies to streaming least-squares problems in other contexts; see in particular the online signal reconstruction from non-uniform samples example in Section~\ref{subsec:lvl cros}.


\section{Streaming with convex cost functions}
\label{sec:cnvx case}
\noindent

In this section, we consider solving \eqref{eq:def:general problem formulation} in the more general case where the $f_t$ are convex functions.  In studying the least-squares case above, we saw that the coupling between the variables means that adding a term to \eqref{eq:def:general problem formulation} (increasing $T$) requires an update of the entire solution.  We also saw that if the least-squares system in \eqref{eq:blktridiagsys} is block diagonally dominant, implying in some sense that coupling between the $\vx_t$ is ``loose,''  then these updates are essentially local in that the magnitudes of the updates decay exponentially as they backpropagate.  Below, we will see that a similar effect occurs for general smooth and strongly convex $f_t$.


We present our main theoretical result--- the linear convergence of the updates--- in two steps. Theorem~\ref{thm:convergence of the updates} shows that if it is possible to find an initialization point with uniformly bounded gradients, then we have the same exponential decay in the updates to the $\hat{x}_t$ as $t$ moves backward away from $T$.  Theorem~\ref{thm:convergence of the updates} shows that we can guarantee such initialization points when the Hessian of the aggregated objective is block diagonal dominant. 

\subsection{Convergence of the streaming solution}
\noindent

Let $ \mathcal{S}^{2,1}_{\mu,L}(\mathcal{D}) $ denote the class of $\mu$-strongly convex functions with two continuous derivatives, with their first derivatives $ L $-Lipschitz continuous on $ \mathcal{D}$.  

\begin{assumption}\label{ass:strong convexity and lip grad}
	For all $ t\geq 1 $, we assume that:
	\begin{enumerate}
		\item $f_t\in\mathcal{S}^{2 , 1}_{\mu_t, L_t}(\R^n\times\R^n)$;
		\item $L_t$ and $ \mu_t$ are uniformly bounded, 
		\[0<\mu_{\min} \leq \mu_{t} \leq L_{t} \leq L_{\max} < \infty.\]
		
	\end{enumerate}
\end{assumption}
This assumption is equivalent to a uniform bound on the eigenvalues of the Hessians of the $f_t$; for every $t\geq 0$ and every $\vx,\vy\in\mathcal{D}$ we have
\[
    \mu_t\leq \lambda_{\mathrm{min}}(\nabla^2f_t(\vx,\vy)) 
    \leq\lambda_{\mathrm{max}}(\nabla^2f_t(\vx,\vy))\leq L_t.
\]
The following lemma translates the uniform bounds on the convexity and Lipschitz constants of the $f_t$ to similar bounds on the aggregate function.

\begin{lemma}
	\label{lemma:lip and conv of partially separable sum}
	If $ f_t(\vx_{t-1},\vx_t)\in \mathcal{S}^{2,1}_{\mu_t,L_t}(\R^n\times \R^n)$, then $ J_T=\sum_{t=1}^Tf_t  \in \mathcal{S}^{2,1}_{\tilde\mu,\tilde L}(\R^{n}\times\dots\times\R^n)$, 
	for $\tilde L = {2 L}_{\max}$ and $ \tilde\mu =  \mu_{\min}$.	
\end{lemma}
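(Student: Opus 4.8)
The plan is to work directly with the Hessian of $J_T$ and exploit the block-tridiagonal sparsity pattern that it inherits from the partially separable structure of the sum. Since $J_T$ is a finite sum of $C^2$ functions it is itself $C^2$, and membership in $\mathcal{S}^{2,1}_{\tilde\mu,\tilde L}$ is equivalent to the two-sided spectral bound $\tilde\mu\,\mId \preceq \nabla^2 J_T(\xbar) \preceq \tilde L\,\mId$ holding for every $\xbar$. Thus the whole proof reduces to bounding the extreme eigenvalues of $\nabla^2 J_T$.

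First I would set up the coordinate embedding. For each $t\in\{1,\dots,T\}$, let $\iota_t:\R^{2n}\to\R^{n(T+1)}$ be the linear map that places a pair of blocks into coordinate positions $t-1$ and $t$ and zeros elsewhere, so that $f_t(\vx_{t-1},\vx_t)$, regarded as a function of $\xbar$, has Hessian $\iota_t\,\nabla^2 f_t(\vx_{t-1},\vx_t)\,\iota_t^\T$. By linearity of the Hessian, $\nabla^2 J_T(\xbar)=\sum_{t=1}^T \iota_t\,\nabla^2 f_t(\vx_{t-1},\vx_t)\,\iota_t^\T$. Note that $\iota_t\iota_t^\T=P_t$ is the orthogonal projector onto the span of coordinate blocks $t-1$ and $t$; it is diagonal, equal to $\mId$ on those two blocks and $\mzero$ elsewhere.

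Next, the lower bound. Assumption~\ref{ass:strong convexity and lip grad} (in its equivalent spectral form stated just above Lemma~\ref{lemma:lip and conv of partially separable sum}) gives $\nabla^2 f_t \succeq \mu_t\,\mId \succeq \mu_{\min}\,\mId$ on $\R^{2n}$, hence $\iota_t\,\nabla^2 f_t\,\iota_t^\T \succeq \mu_{\min}P_t$ and therefore $\nabla^2 J_T \succeq \mu_{\min}\sum_{t=1}^T P_t$. The matrix $\sum_{t=1}^T P_t$ is diagonal, and coordinate block $j$ receives a contribution from $P_t$ exactly when $t\in\{j,j+1\}\cap\{1,\dots,T\}$, which is nonempty for every $j\in\{0,\dots,T\}$; hence $\sum_t P_t \succeq \mId$ and $\nabla^2 J_T \succeq \mu_{\min}\,\mId$, giving $\tilde\mu=\mu_{\min}$. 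For the upper bound, symmetrically $\nabla^2 f_t \preceq L_t\,\mId \preceq L_{\max}\,\mId$ yields $\nabla^2 J_T \preceq L_{\max}\sum_t P_t$. The diagonal entry of $\sum_t P_t$ at block $j$ counts the number of $t\in\{1,\dots,T\}$ with $t\in\{j,j+1\}$, which is $2$ for the interior blocks $1\le j\le T-1$ and $1$ for $j\in\{0,T\}$; thus $\bigl\|\sum_t P_t\bigr\|\leq 2$, and so $\nabla^2 J_T \preceq 2L_{\max}\,\mId$, i.e.\ $\tilde L=2L_{\max}$.

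I expect essentially no serious obstacle here; the argument is bookkeeping on the overlap pattern of the blocks. The only points requiring a little care are the boundary blocks $j=0$ and $j=T$ (each touched by only one $f_t$, which only weakens the diagonal entries and hence never hurts either bound) and the standard fact that, for a $C^2$ function, $\mu$-strong convexity and $L$-Lipschitz gradient are equivalent to the lower and upper Hessian spectral bounds respectively — which the excerpt already invokes for the $f_t$ and which applies verbatim to $J_T$.
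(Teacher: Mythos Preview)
Your proposal is correct and follows essentially the same route as the paper: both arguments reduce the claim to two-sided spectral bounds on $\nabla^2 J_T$ and then exploit that each block variable is touched by at most two (and at least one) of the $f_t$. The only cosmetic difference is that the paper obtains the upper bound by splitting $\nabla^2 J_T$ into its odd-indexed and even-indexed summands (each block-diagonal, hence of norm at most $L_{\max}$) rather than bounding $\sum_t P_t$ directly, but this is the same overlap count in different packaging.
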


Lemma \ref{lemma:lip and conv of partially separable sum} above is enough to establish the convergence of the updates provided that we can assume uniform bounds on the gradients of the initialization points. 
\begin{theorem}
    \label{thm:convergence of the updates}
    Suppose that the $f_t$ are smooth and strongly convex as in Assumption~\ref{ass:strong convexity and lip grad}. Suppose also that there exists a constant $M_g$ and a set of initialization points $\{\vw_T\}$ such that 
    \begin{equation}
        \label{eq:unigradbound}
        \|\nabla f_T(\xhat_{T-1|T-1},\vw_T)\|\leq M_g,
    \end{equation}
    for all $T>0$.  Then there exists $\vx_{t}^*$ such that 
	\begin{equation*} \label{eqn:cnvx asym sol}
		\hat\vx_{t|T} \rightarrow  \vx_{t}^* \quad\text{as}\quad t\leq T\rightarrow\infty,
	\end{equation*} 
	and a constant $C_1$ that depends on $\mu_{\min}, L_{\max}, M_g$ such that
	\begin{equation}
	    \label{eq:linearconvxt}
		\|\hat\vx_{t|T} - \vx_{t}^*\|
		~\leq~ 	
		C_1~\left( \dfrac{2L_{\max} -\mu_{\min}}{2L_{\max} +\mu_{\min}}\right)^{T-t}.  
	\end{equation}
\end{theorem}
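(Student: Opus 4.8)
The plan is to compare the minimizer $\hat\vxbar_T$ of $J_T$ with the minimizer $\hat\vxbar_{T+1}$ of $J_{T+1}$, show that the per-frame difference $\|\hat\vx_{t|T+1} - \hat\vx_{t|T}\|$ decays geometrically in $T-t$, and then conclude that $\{\hat\vx_{t|T}\}_T$ is Cauchy, so that a limit $\vx_t^*$ exists and inherits the same decay rate. The key structural fact I would lean on is that the Hessian $\nabla^2 J_{T+1}$ is block-tridiagonal with diagonal blocks whose eigenvalues lie in $[\mu_{\min}, 2L_{\max}]$ (Lemma~\ref{lemma:lip and conv of partially separable sum}), and with the new functional $f_{T+1}$ only touching the $\vx_T$ and $\vx_{T+1}$ blocks. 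So incrementing $T$ perturbs the optimality system only in its last two block-rows, and the question is how that perturbation propagates back through the tridiagonal coupling.

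First I would set up the increment as a perturbed system. Writing the optimality conditions $\nabla J_{T+1}(\hat\vxbar_{T+1}) = \mzero$ and $\nabla J_T(\hat\vxbar_T) = \mzero$ (with the latter padded by the new variable $\vw_{T+1}$ that has the bounded-gradient property \eqref{eq:unigradbound}), and subtracting, I get a system of the form $\mM\,\vd = \vr$, where $\vd = \hat\vxbar_{T+1} - (\hat\vxbar_T; \vw_{T+1})$ is the update, $\mM$ is an averaged Hessian (via the integral form of the fundamental theorem of calculus, so $\mM$ is still block-tridiagonal with eigenvalues of diagonal blocks in $[\mu_{\min}, 2L_{\max}]$ and off-diagonal blocks bounded in norm), and $\vr$ has only its last two blocks nonzero, with norm controlled by $M_g$. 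This is exactly the situation Lemma~\ref{lem:sparseb3dsystake2} (and the conditioning guaranteed by Lemma~\ref{lm:Qcond}) is built for: the components of the solution far from the nonzero part of the right-hand side are geometrically small. More precisely, I would invoke the block-LU / forward-backward machinery — the $\mQ_t$ are uniformly well-conditioned by Lemma~\ref{lm:Qcond}, and the back-substitution recursion \eqref{eq:backwardsweep} contracts by a factor of $\|\mU_t\| \le \theta/(1-\eps_*)$ at each step — to show $\|\vd_t\| \lesssim (\theta/(1-\eps_*))^{T-t} M_g$ for the earlier frames, where the specific constants $\theta,\delta,\eps_*$ come out of the strong-convexity and Lipschitz bounds and combine to give the clean ratio $(2L_{\max}-\mu_{\min})/(2L_{\max}+\mu_{\min})$ appearing in \eqref{eq:linearconvxt}.

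With the one-step bound $\|\hat\vx_{t|T+1} - \hat\vx_{t|T}\| \le C_1' \, a^{T-t}$ in hand (writing $a$ for the ratio), the telescoping step is routine: for $T' > T$, $\|\hat\vx_{t|T'} - \hat\vx_{t|T}\| \le \sum_{s=T}^{T'-1} C_1' a^{s-t} \le C_1' a^{T-t}/(1-a)$, so the sequence is Cauchy and converges to some $\vx_t^*$; letting $T' \to \infty$ converts the partial-sum bound into the stated bound on $\|\hat\vx_{t|T} - \vx_t^*\|$, with $C_1 = C_1'/(1-a)$ absorbing the geometric-series factor and the dependence on $\mu_{\min}, L_{\max}, M_g$.

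The main obstacle I anticipate is making the one-step perturbation bound genuinely uniform in $T$ — in particular, controlling the norm of the residual $\vr$, which requires a uniform bound on $\|\nabla f_{T+1}(\hat\vx_{T|T}, \hat\vx_{T+1|T+1})\|$-type quantities, and this is precisely where the hypothesis \eqref{eq:unigradbound} on the initialization points $\vw_T$ enters. One has to relate the gradient at the \emph{actual} optimal $\hat\vx_{T|T}$ (which differs from $\hat\vx_{T-1|T-1}$ appearing in \eqref{eq:unigradbound}) back to $M_g$ using Lipschitz continuity of $\nabla f_T$ and a bound on $\|\hat\vx_{T|T} - \hat\vx_{T-1|T-1}\|$ — so there is a mild bootstrapping flavor where the frame-to-frame stability of the solution and the residual bound have to be established together, or in the right order. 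The rest — the conditioning of $\mQ_t$, the contraction of back-substitution, the telescoping — follows the least-squares blueprint of Section~\ref{sec:ls} essentially verbatim, with averaged Hessians playing the role of the fixed matrices $\mH_t, \mE_t$.
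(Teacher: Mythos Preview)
Your approach is genuinely different from the paper's, and it has a gap that prevents it from proving Theorem~\ref{thm:convergence of the updates} as stated.

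The issue is the appeal to the block-LU machinery. Lemma~\ref{lm:Qcond} (and hence the contraction of the back-substitution by the factor $\theta/(1-\eps_*)$) requires the block diagonal dominance condition $\theta \leq (1-\delta)/2$, i.e., a separate bound on the off-diagonal blocks $\|\mE_t\|$ that is small relative to the conditioning of the diagonal blocks. Assumption~\ref{ass:strong convexity and lip grad} alone does \emph{not} give this: it bounds the full Hessian of each $f_t$ between $\mu_{\min}$ and $L_{\max}$, but the cross-partial $\mE_t$ can have norm as large as $L_{\max}$, while $(1-\delta)/2 = \mu_{\min}/(2L_{\max}+\mu_{\min}) < \mu_{\min}$. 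So the hypotheses of Lemma~\ref{lm:Qcond} are not met, the $\mQ_t$ need not be uniformly well-conditioned, and the decay rate $\theta/(1-\eps_*)$ does not simplify to $(2L_{\max}-\mu_{\min})/(2L_{\max}+\mu_{\min})$ --- that ratio is $\delta$, not $\theta/(1-\eps_*)$. Your route would prove the result only under the extra diagonal-dominance hypothesis of Theorem~\ref{thm:bondcnvgrad}, which is precisely what Theorem~\ref{thm:convergence of the updates} is designed to avoid.

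The paper's argument sidesteps this entirely by \emph{tracing gradient descent iterates} rather than inverting an averaged Hessian. Initialize gradient descent on $J_T$ at $(\hat\vx_{0|T-1},\ldots,\hat\vx_{T-1|T-1},\vw_T)$. By optimality of the first $T$ blocks for $J_{T-1}$, the initial gradient $\nabla J_T(\ybar^{(0)})$ vanishes in every block except the last two. Because the gradient has block-tridiagonal dependence, each gradient step can ``infect'' at most one additional block backward; hence after $\tau$ steps the block $\vy_{T-\tau}$ still has not moved from $\hat\vx_{T-\tau|T-1}$. Therefore $\|\hat\vx_{T-\tau|T}-\hat\vx_{T-\tau|T-1}\| = \|\vy^*_{T-\tau}-\vy^{(\tau)}_{T-\tau}\| \leq \|\ybar^*-\ybar^{(\tau)}\| \leq r_0\,a^{\tau}$, where $a=(2L_{\max}-\mu_{\min})/(2L_{\max}+\mu_{\min})$ is the standard linear rate of gradient descent on a $\mu_{\min}$-strongly convex, $2L_{\max}$-smooth objective (Lemma~\ref{lemma:lip and conv of partially separable sum}), and $r_0 \leq \tilde\mu^{-1}\|\nabla J_T(\ybar^{(0)})\| \leq \tilde\mu^{-1} M_g$ by strong convexity and \eqref{eq:unigradbound}. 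The telescoping and Cauchy argument is then exactly as you describe. Note that this uses only the tridiagonal \emph{structure} (for the one-block-per-step propagation) and the global eigenvalue bounds (for the rate $a$); no diagonal dominance is needed, and no bootstrapping between residual size and frame-to-frame stability is required, since $r_0$ is bounded directly by the hypothesis \eqref{eq:unigradbound}.
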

We prove Theorem~\ref{thm:convergence of the updates} in Appendix~\ref{sec:weaker conv result proof}.  The argument works by tracing the steps the gradient descent algorithm takes when minimizing $J_T$ after being initialized at the minimizer of $J_{T-1}$ (with the newly introduced variables initialized at $\vw_T$ satisfying \eqref{eq:unigradbound}). 

The condition \eqref{eq:unigradbound} is slightly unsatisfying as it relies on properties of the $f_t$ around the global solutions.  We will see in Theorem~\ref{thm:bondcnvgrad} below how to remove this condition by adding additional assumptions on the intrinsic structure of the $f_t$ and their relationships.  Nevertheless, \eqref{eq:unigradbound} does not seem unreasonable.  For example, if the solutions $\{\hat\vx_{T|T}\}$ are shown to be uniformly bounded, then we could use the fact that the $f_t$ have Lipschitz gradients to establish \eqref{eq:unigradbound} through an appropriate choice of $\vw_T$.

Although our theory lets us consider any $\vw_T$ , there are two very natural choices in practice.
One option is to simply minimize $f_T$ with the first variable fixed at the previous solution,

\begin{equation}\label{eq:initnewvar}
	\vw_T = \arg\min_{\vw} f_T(\xhat_{T-1|T-1} ,\vw).
\end{equation}

Alternatively, $\vw_T$ could be fixed a priori at the $\bar\vx_{T|T}$ that minimizes $f_t$ in isolation (see the definition in \eqref{eq:def:local sol} below).





\subsection{Boundedness through block diagonal dominance}
\label{subsec:Restricted conv}
\noindent
By adding some additional assumptions on the structure of the problem, we can guarantee condition \eqref{eq:unigradbound} in Theorem~\ref{thm:convergence of the updates}.  Our first (very mild) additional assumption is that the minimizers of the individual $f_t$, when computed in isolation, are bounded.
\begin{assumption}
    \label{ass:bounded local sol variation}
	With the minimizers of the isolated $f_t$ denoted as 
	\begin{equation}\label{eq:def:local sol}
		(\bar\vx_{t-1|t},~\bar\vx_{t|t}) 
		:= \argmin_{\vx_{t-1},\vx_t} f_t(\vx_{t-1},\vx_t),
	\end{equation}
	we assume that there exists a constant $M_x$ such that for all $ t\geq 0 $
	\begin{equation}
	    \label{eq:isolatedbound}
		\left\| \begin{bmatrix}
			\bar\vx_{t-1|t}\\
			\bar\vx_{t|t}
		\end{bmatrix}\right\| \leq M_x.
	\end{equation}
\end{assumption}
The strong convexity of the $f_t $ implies that these isolated minimizers are unique.

Note that there are two local solutions for the variables $\vx_t$ at time $t$: $\bar\vx_{t|t}$ is the second argument for the minimizer for $f_t$, while $\bar\vx_{t|t+1}$ is the first argument for the minimizer of $f_{t+1}$.  Assumption~\ref{ass:bounded local sol variation} also implies that these are close: $\|\bar\vx_{t|t} -  \bar\vx_{t|t+1}\| \leq 2M_x$.  We emphasize that Assumption~\ref{ass:bounded local sol variation} only prescribes structure on the minimizers of the individual $f_t$, not on the minimizers of the aggregate $J_T$.  
%
%
Showing how this assumed (but very reasonable) bound on the size of the minimizers of the individual $f_t$ translates into a bound on the size of the minimizers $\hat\vx_{t|T}$ of $J_T$ is a large part of Theorem~\ref{thm:bondcnvgrad}.

The last piece of structure that allows us to make this link comes from the Hessian of the objective $J_T$.  The fact that $f_t$ depends only on variables $\vx_{t-1}$ and $\vx_t$ means that the  Hessian is block diagonal, similar to the system matrix \eqref{eq:blktridiagsys} in the least-squares case:
\begin{equation}
\label{eq:hessianblk3diag}
\medmath{ 
	    \nabla^2 J_T =  
		\begin{bmatrix}
			\mH_0 & \mE_0^\T & & & \\
			\mE_0 & \mH_1 & \mE_1^\T & & \\\\
			& \ddots & \ddots & \ddots & \\\\
			& & \mE_{T-2} & \mH_{T-1}& \mE_{T-1}^\T \\
			& & &			\mE_{T-1} & \mH_T'
	\end{bmatrix}},
\end{equation}
where the main diagonal terms are given by\footnote{We use $ \nabla_{i,j}(\cdot)$ for ${\partial^2 (\cdot) }/{\partial_{\vx_i}\partial_{\vx_j}} $.}
\begin{align*}
	& \mH_t = 
	\begin{cases}
		\nabla_{0,0} f_1(\vx_{0},\vx_1) ,	& t=0;\\
		\nabla_{t,t}(f_t(\vx_{t-1},\vx_t) + f_{t+1}(\vx_t,\vx_{t+1}),	& 1\leq t< T;\\
		\nabla_{T,T} f_T(\vx_{T-1},\vx_T) ,	& t = T;\\
	\end{cases}
	\intertext{and the off diagonal terms by }
	&\mE_{t} =  \nabla_{t+1,t} f_{t+1}(\vx_{t},\vx_{t+1}) , \hspace{2.0cm} t=0,\dots, T-1.
\end{align*}

If the Hessian is block diagonally dominant everywhere, then we can leverage the boundedness of the isolated solutions \eqref{eq:isolatedbound} to show the boundedness of the aggregate solutions $\hat\vx_{t|T}$.

\begin{theorem} 
\label{thm:bondcnvgrad}
	Suppose that $ f_t$ as in Assumption \ref{ass:strong convexity and lip grad} and let
	\[
	 \kappa := \frac{2L_{\max}+\mu_{\min}}{2}, \quad \delta := \frac{{2L_{\max}-\mu_{\min}}}{{2L_{\max}+\mu_{\min}}},
	\]	
	 and suppose that for another constant $\theta>0$ it holds that 
	 \[
	  \|\mE_t(\bvx)\|\leq\kappa\theta, \quad \forall t\geq 0 .
	 \]
	 
	\noindent If it holds that 
	$
	\theta < {(1-\delta)}/{2}
	$, 
	and in addition, the isolated minimizers $\{(\bar\vx_{t|t},\bar\vx_{t|t+1})\}_{t=0}^{T}$ are bounded as in Assumption~\ref{ass:bounded local sol variation}, then the minimizers $\{\hat\vx_{t|T}\}_{t=0}^T$ of $J_T$ will be bounded as
	\begin{equation}
	    \label{eq:solutionbound}
	    \|\hat\vx_{t|T}\| ~\leq ~ M_g\frac{(1-\rho^{T-t+1})}{(1-\eps_*)(1-\rho)^2},
	\end{equation}
	where 
	\[
		M_g = 2M_x\kappa\sqrt{L_{max}^2+\theta^2},
	\]
	for some $ \eps_* ,\rho$ with $\rho=\theta/(1-\eps_*)<1$.
	
\end{theorem}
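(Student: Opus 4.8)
The plan is to reduce the statement to Lemma~\ref{lm:xtbound} by reading the first-order optimality condition for $J_T$ as a block-tridiagonal linear system whose right-hand side is controlled by the isolated minimizers. Concretely: since $J_T$ is strongly convex (Lemma~\ref{lemma:lip and conv of partially separable sum}), $\hat\vxbar_T$ is the unique zero of $\nabla J_T$, so I would apply the fundamental theorem of calculus along the segment from the origin to $\hat\vxbar_T$ to get $\mzero = \nabla J_T(\mzero) + \widetilde{\mathbf H}\,\hat\vxbar_T$, where $\widetilde{\mathbf H} = \int_0^1 \nabla^2 J_T(s\hat\vxbar_T)\,ds$. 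Because $f_t$ couples only $\vx_{t-1}$ and $\vx_t$, every $\nabla^2 J_T$ has the block-tridiagonal form \eqref{eq:hessianblk3diag}, and the zeros outside the three central diagonals survive the averaging; hence $\widetilde{\mathbf H}$ is block-tridiagonal with diagonal blocks $\widetilde{\mathbf H}_t = \int_0^1 \mH_t(s\hat\vxbar_T)\,ds$ and off-diagonal blocks $\widetilde{\mathbf E}_t = \int_0^1 \mE_t(s\hat\vxbar_T)\,ds$, so $\hat\vxbar_T$ solves a system of the form \eqref{eq:blktridiagsys} with $\mH_t,\mE_t$ replaced by $\widetilde{\mathbf H}_t,\widetilde{\mathbf E}_t$ and right-hand side $\vg_t = -[\nabla J_T(\mzero)]_t$.

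Next I would check that $\widetilde{\mathbf H}$ satisfies the hypotheses of Lemma~\ref{lm:Qcond} for the stated $\kappa,\delta,\theta$. By Assumption~\ref{ass:strong convexity and lip grad}, each diagonal Hessian sub-block $\nabla_{i,i}f_t$, being a principal submatrix of $\nabla^2 f_t$, obeys $\mu_{\min}\mId \preceq \nabla_{i,i}f_t \preceq L_{\max}\mId$, so the interior blocks of $\nabla^2 J_T$ satisfy $2\mu_{\min}\mId \preceq \mH_t \preceq 2L_{\max}\mId$ and the two endpoint blocks $\mu_{\min}\mId \preceq \mH_t \preceq L_{\max}\mId$; in all cases $\mu_{\min}\mId \preceq \mH_t \preceq 2L_{\max}\mId$, which for $\kappa = (2L_{\max}+\mu_{\min})/2$ is exactly $\|\kappa^{-1}\mH_t - \mId\| \le \delta$. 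These semidefinite inequalities, and the hypothesis $\|\mE_t(\bvx)\| \le \kappa\theta$, pass through the integral, so $\|\kappa^{-1}\widetilde{\mathbf H}_t - \mId\| \le \delta$ and $\|\widetilde{\mathbf E}_t\| \le \kappa\theta$; also $\delta < 1$ since $\mu_{\min} > 0$. Because $\theta < (1-\delta)/2$, Lemma~\ref{lm:Qcond} applies and produces $\eps_* < 1$; moreover $1-\eps_* = \tfrac{1-\delta}{2} + \sqrt{\tfrac{(1-\delta)^2}{4}-\theta^2} > \theta$, so $\rho = \theta/(1-\eps_*) < 1$ as needed.

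The substantive step is bounding the right-hand side blocks $\|\vg_t\| = \|[\nabla J_T(\mzero)]_t\|$, and this is where Assumption~\ref{ass:bounded local sol variation} enters. For interior $t$, $[\nabla J_T(\mzero)]_t = \nabla_t f_t(\mzero,\mzero) + \nabla_t f_{t+1}(\mzero,\mzero)$ (with a single term at $t = 0$ and $t = T$), where $\nabla_t f_s$ denotes the partial gradient of $f_s$ in the block $\vx_t$. Each such partial gradient vanishes at the isolated minimizer of the corresponding $f_s$, so the fundamental theorem of calculus along the segment from that minimizer to the origin writes, for example, $\nabla_t f_{t+1}(\mzero,\mzero)$ as minus the integral over $[0,1]$ of the Hessian block-row $[\,\nabla_{t,t}f_{t+1}\ \ \nabla_{t,t+1}f_{t+1}\,]$ (evaluated along the segment) applied to $(\bar\vx_{t|t+1};\bar\vx_{t+1|t+1})$. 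Using $\|[\,\mathbf P\ \ \mathbf R\,]\| \le \sqrt{\|\mathbf P\|^2+\|\mathbf R\|^2}$, the Lipschitz bound $\|\nabla_{t,t}f_{t+1}\| \le L_{\max}$ on the diagonal sub-block, the cross-block identity $\nabla_{t,t+1}f_{t+1} = \mE_t^\T$ with $\|\mE_t\| \le \kappa\theta$, and \eqref{eq:isolatedbound}, each term is at most $M_x\sqrt{L_{\max}^2+\kappa^2\theta^2}$; summing the (at most two) terms and using $\kappa \ge 1$ gives $\|\vg_t\| \le 2M_x\sqrt{L_{\max}^2+\kappa^2\theta^2} \le 2M_x\kappa\sqrt{L_{\max}^2+\theta^2} = M_g$, uniformly in $t$, and the symmetric argument handles $\nabla_t f_t(\mzero,\mzero)$ (whose cross-block is $\nabla_{t,t-1}f_t = \mE_{t-1}$). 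Feeding $M = M_g$ into Lemma~\ref{lm:xtbound} then returns \eqref{eq:solutionbound} verbatim. I expect this right-hand side estimate---separating the ``large'' Hessian sub-blocks ($O(L_{\max})$) from the ``small'' cross sub-blocks ($O(\kappa\theta)$) and assembling the constant $M_g$---to be the main obstacle; the rest is a mechanical application of Lemmas~\ref{lm:Qcond} and~\ref{lm:xtbound} on top of the structural bounds from Assumptions~\ref{ass:strong convexity and lip grad} and~\ref{ass:bounded local sol variation}.
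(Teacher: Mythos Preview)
Your proposal is correct and follows essentially the same route as the paper: both linearize the optimality condition $\nabla J_T(\hat\vxbar_T)=\mzero$ via an integrated Hessian into a block-tridiagonal system, verify Lemma~\ref{lm:Qcond} from the uniform curvature bounds, bound the right-hand side blockwise using the isolated minimizers from Assumption~\ref{ass:bounded local sol variation}, and finish with Lemma~\ref{lm:xtbound}. The only cosmetic difference is the base point for the line integral---the paper applies \eqref{eq:gradthm} once per $f_t$ at that function's isolated minimizer (so the $\bar\vx$'s appear directly in $\vg_t$), whereas you linearize $J_T$ at the origin and then use a second line integral to bound $\|[\nabla J_T(\mzero)]_t\|$; both choices produce the same $M_g$ and the same invocation of Lemma~\ref{lm:xtbound}.
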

%

The bound \eqref{eq:solutionbound} on the size of the solutions can be used to bound the size of the gradient in \eqref{eq:unigradbound}.  If we initialize the variables in frame $T$ as the isolated minimizer from \eqref{eq:def:local sol}, $\vw_T = \bar\vx_{T|T}$, we have
\begin{align*}
    &\|\nabla f_T(\hat\vx_{T-1|T-1},\vw_T)\| = \\
    & \quad \|\nabla f_T(\hat\vx_{T-1|T-1},\bar\vx_{T|T}) - \nabla f_T(\bar\vx_{T-1|T},\bar\vx_{T|T})\| \\
    &\quad \leq L_{\mathrm{max}}\|\hat\vx_{T-1|T-1} - \bar\vx_{T-1|T}\| \\
    &\quad \leq L_{\mathrm{max}}\left(\|\hat\vx_{T-1|T-1}\| + \|\bar\vx_{T-1|T}\| \right).
\end{align*}
These two terms can then be bounded by \eqref{eq:solutionbound} and \eqref{eq:isolatedbound}.  Thus the conditions of Theorem~\ref{thm:bondcnvgrad} ensure the rapid convergence in \eqref{eq:linearconvxt}.

As in the least-squares case, the $\kappa$ in Theorem~\ref{thm:bondcnvgrad} is just a scaling constant so that the eigenvalues  of the $\kappa^{-1}\mH_t$ are within $1\pm\delta$.  Perhaps more important is the condition that $ \theta < (1-\delta)/2$.  We can interpret this condition as meaning that the coupling between the $f_t$ is ``loose''; in a second-order approximation to $J_T$, the quadratic interactions between the $\vx_t$ and themselves is stronger than between the $\vx_{t-1}$ or $\vx_{t+1}$.
As in the least-squares case, if the Hessian is strictly block diagonal dominant,
then again $(1-\delta) > 2\theta$.


In the least-squares case, the fast convergence of the updates enabled efficient computations through truncation of the backward updates. As we show next, truncation is also effective in the general convex case. 

\subsection{Streaming with finite updates}
\label{subsec: cnvx trunc nw }
\noindent
Just as we did in the least squares case, we can control the error in the convex case when the updates are truncated.  While our goals parallel those in Section~\ref{subsec:ls:trunc strm}, the analysis is more delicate.  The main difference in the general convex case is that when we stop updating a set of variables $\hat\vx_{t|T}$ it introduces (small) errors in the future estimates $\hat\vx_{t'|T'}$ for $t'>t, T'\geq T$; we were able to avoid this in the least-squares case since the forward ''state'' variables $\vv_t$ carry all the information needed to compute future estimates optimally.  Nonetheless, we show that the errors introduced by truncation in the convex case are manageable.

A key realization for this is that the problem \eqref{eq:def:general problem formulation} has a property similar to conditional independence in Markov random processes\footnote{We also note that if $\vx_1,\ldots,\vx_T$ were a Markov sequence of Gaussian random vectors, then the covariance matrix of the sequence would have the same tridiagonal structure as the Hessian in \eqref{eq:hessianblk3diag}.}.  
If we fix the variables $\vx_\tau$ in frame $\tau$, then the optimization program can be decoupled into two independent programs that minimize $\sum_{t=1}^\tau f_t(\vx_{t-1},\vx_t)$ and $\sum_{t=\tau+1}^Tf_t(\vx_{t-1},\vx_t)$ independently.

In particular, if we consider the truncation of the objective functional so that keeps its last $ B $ loss terms,
\begin{equation}\label{eq:JT-B}
	J_{T_B}(\vx_{T-B},\dots, \vx_{T}) = \sum_{t=T-B+1}^T f_t(\vx_{t-1}, \vx_t),
\end{equation}
fixing the first frame of variables to correspond to those in the minimizer of $J_T$, meaning $\vx_{T-B}=\hat{\vx}_{T_B|T}$, will allow us to recover the optimal $\vx_{T-B+1},\ldots,\vx_T$ by solving the smaller optimization problem of minimizing $J_{T_B}$.  We state this precisely with the following proposition.

\begin{proposition}
	\label{prop:solvWfixVar}	
	Let $\{\vy_{T-B+i}^* \}_{i=0}^B$  be the solution to 
\begin{align*}
		 & \minimize J_{T_B}(\vy_{T-B},\dots,\vy_{T}) \\
		 & \quad\quad\quad \text{subject to} ~~\vy_{T-B} = \xhat_{T-B|T}.
\end{align*}
Then the solution satisfies
	\[
		\vy_{T-B+i}^* = \xhat_{T-B+i|T}, ~~ i=1,\dots, B.
	\]
\end{proposition}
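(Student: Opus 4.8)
The plan is to prove Proposition~\ref{prop:solvWfixVar} by appealing directly to the first-order optimality (stationarity) conditions of the two optimization problems and the block-tridiagonal structure of the gradient of $J_T$. The global solution $\hat\vxbar_T$ of $J_T$ satisfies $\nabla J_T(\hat\vxbar_T)=0$, which, because each $f_t$ couples only consecutive frames, breaks into a system of block equations: for each frame index $t$ the component $[\nabla J_T]_t$ depends only on $\vx_{t-1},\vx_t,\vx_{t+1}$. First I would write out these stationarity equations explicitly for the frames $t = T-B+1,\dots,T$, i.e. the ``tail'' block of equations. The key observation is that every one of these tail equations involves only the variables $\vx_{T-B},\vx_{T-B+1},\dots,\vx_T$ — the frame $\vx_{T-B}$ enters only through $f_{T-B+1}$, and nothing in frames $\le T-B-1$ appears. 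Hence, if we substitute the known values $\hat\vx_{T-B|T},\hat\vx_{T-B+1|T},\dots,\hat\vx_{T|T}$, these $B$ block equations are satisfied.

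Next I would set up the optimality conditions for the constrained problem $\minimize J_{T_B}$ subject to $\vy_{T-B}=\hat\vx_{T-B|T}$. Since the constraint simply pins the first frame, this is equivalent to an unconstrained minimization of $J_{T_B}$ over $(\vy_{T-B+1},\dots,\vy_T)$ with $\vy_{T-B}$ held fixed at $\hat\vx_{T-B|T}$. Its stationarity conditions are: $\partial J_{T_B}/\partial \vy_t = 0$ for $t=T-B+1,\dots,T$. Because $J_{T_B}=\sum_{t=T-B+1}^T f_t$ is exactly the tail of the sum defining $J_T$, these partial-derivative equations are \emph{identical} to the tail block of the $\nabla J_T = 0$ system written above (the frames $t \ge T-B+1$ of $J_T$ receive no contribution from any $f_s$ with $s\le T-B$). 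Therefore the point $(\vy_{T-B+1},\dots,\vy_T) = (\hat\vx_{T-B+1|T},\dots,\hat\vx_{T|T})$ with $\vy_{T-B}=\hat\vx_{T-B|T}$ satisfies the stationarity conditions of the constrained problem.

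Finally I would invoke uniqueness: by Lemma~\ref{lemma:lip and conv of partially separable sum}, $J_{T_B}$ (being a partially separable sum of functions in $\mathcal{S}^{2,1}_{\mu,L}$) is strongly convex, and restricting it to the affine subspace $\{\vy_{T-B}=\hat\vx_{T-B|T}\}$ preserves strong convexity; hence the constrained problem has a unique minimizer, which must be the unique stationary point. Since we exhibited $(\hat\vx_{T-B|T},\dots,\hat\vx_{T|T})$ as a stationary point, it is \emph{the} minimizer, giving $\vy_{T-B+i}^* = \hat\vx_{T-B+i|T}$ for $i=1,\dots,B$.

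The only subtlety — and the step I would be most careful with — is the bookkeeping on indices and boundary terms when matching the tail of $\nabla J_T=0$ to the gradient of $J_{T_B}$: one must check that the stationarity equation for frame $T-B+1$ in the full problem, which a priori involves $f_{T-B}$ (through the dependence of $f_{T-B}$ on $\vx_{T-B+1}$... wait, $f_{T-B}$ depends on $\vx_{T-B-1},\vx_{T-B}$, not $\vx_{T-B+1}$), in fact only picks up contributions from $f_{T-B+1}$ and $f_{T-B+2}$, both of which are present in $J_{T_B}$. This is precisely the ``conditional independence''/Markov structure highlighted in the paragraph preceding the proposition, so no real obstacle arises; it is just a matter of stating the block decomposition of $\nabla J_T$ cleanly. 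No convexity is actually needed for the ``stationary point'' direction — only for uniqueness — so the proof is essentially an identity-of-equations argument plus a one-line uniqueness remark.
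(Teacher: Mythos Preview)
Your proposal is correct and matches the paper's approach: the paper does not give a formal proof of this proposition, treating it as an immediate consequence of the Markov-like decoupling observation in the preceding paragraph (fixing $\vx_{T-B}$ separates the sum into two independent programs). Your argument via matching the tail block of the stationarity conditions of $J_T$ with those of the constrained $J_{T_B}$ problem, followed by uniqueness from strong convexity, is exactly the natural formalization of that observation, and your boundary-index check is the only thing that needs care.
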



As before, we denote the truncated solutions as $\hat\vz_{t|t'}$ and their final values $\vz_t^* = \hat\vz_{t|T}$ for $T> t+B$.  

In the spirit of Proposition \ref{prop:solvWfixVar}, at each time step $T$, we fix $\vz_{T-B}=\vz^*_{T-B}$ as a boundary condition, then minimize the last $B$ terms in the sum in  \eqref{eq:def:general problem formulation}, setting $ t'=T-B+1$:
\begin{equation}\label{eq:buff opt prblm}
	\underset{(\vz_{t'},\dots,\vz_{T})}{\text{minimize}} f_{t'}(\xfstar_{t'-1},\vz_{t'}) 
	+ \sum_{t=t'+1}^T  f_t(\vz_{t-1},\vz_t).
\end{equation}


It should be clear by Proposition \ref{prop:solvWfixVar} and  the problem formulation in \eqref{eq:buff opt prblm}, that any difference between $\xhat_{T-B+i|T}$ and $\xfhat_{T-B+i|T}$ depends entirely on the difference of $\xhat_{T-B|T}$ and $\xfstar_{T-B}$.

\begin{lemma}\label{lem:truncdepbndtake2}
	Suppose the conditions of Theorem \ref{thm:bondcnvgrad} hold. 
	Then,
	\begin{equation*}
		\left\|\xhat_{T-B+i|T} - \xfhat_{T-B+i|T}  \right\| \leq  \frac{ \theta}{(1-\delta)}	\left\|\xhat_{T-B|T} - \xfstar_{T-B}  \right\|, 
	\end{equation*}	
	for $i=1,\dots,B$.
\end{lemma}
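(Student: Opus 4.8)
\emph{Proof strategy.} The plan is to recognize that both $(\xhat_{T-B+1|T},\dots,\xhat_{T|T})$ and $(\xfhat_{T-B+1|T},\dots,\xfhat_{T|T})$ are minimizers of the \emph{same} functional $J_{T_B}$ over the last $B$ blocks, differing only in the boundary value forced on the first frame. I would first invoke Proposition~\ref{prop:solvWfixVar} to identify the former as the minimizer of $J_{T_B}(\vx_{T-B},\dots,\vx_T)$ in the variables $\vx_{T-B+1},\dots,\vx_T$ with $\vx_{T-B}$ pinned at $\xhat_{T-B|T}$, and the definition of the truncated scheme through \eqref{eq:buff opt prblm} (with $t'=T-B+1$) to identify the latter as the minimizer of the same functional with $\vx_{T-B}$ pinned at $\xfstar_{T-B}$ instead. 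Since $J_{T_B}$ is a consecutive partially separable sum of the $f_t$, the argument of Lemma~\ref{lemma:lip and conv of partially separable sum} applies to it verbatim, so $J_{T_B}$ is $\mu_{\min}$-strongly convex; both minimizers therefore exist, are unique, and are characterized by the vanishing of the gradient of $J_{T_B}$ in the interior variables $\vx_{T-B+1},\dots,\vx_T$.

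Next I would set $\Delta_0 := \xhat_{T-B|T}-\xfstar_{T-B}$ and $\Delta_i := \xhat_{T-B+i|T}-\xfhat_{T-B+i|T}$, stack the two full minimizers of $J_{T_B}$ (boundary block included), and subtract their gradients, applying the fundamental theorem of calculus to $\nabla J_{T_B}$ along the segment joining them. This produces a linear system
\[
	\overline{\nabla^2 J_{T_B}}\,\begin{bmatrix}\Delta_0\\ \Delta_1\\ \vdots\\ \Delta_B\end{bmatrix}
	= \begin{bmatrix}\star\\ \mzero\\ \vdots\\ \mzero\end{bmatrix},
\]
where $\overline{\nabla^2 J_{T_B}}$ is the Hessian of $J_{T_B}$ averaged over the segment and the right-hand side vanishes in every interior block because the interior first-order conditions hold at both endpoints (the top block $\star$, a gradient difference in $\vx_{T-B}$, is generally nonzero but irrelevant). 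The matrix $\overline{\nabla^2 J_{T_B}}$ is block-tridiagonal with the sparsity of \eqref{eq:hessianblk3diag}: because $\vx_{T-B}$ appears only in $f_{T-B+1}$, the block coupling the first frame to the interior is an average of $\mE_{T-B}(\cdot)$, hence of norm at most $\kappa\theta$, while the interior-interior submatrix is a principal submatrix of $\overline{\nabla^2 J_{T_B}}\succeq\mu_{\min}\mId$ and is therefore invertible with inverse norm at most $1/\mu_{\min}$.

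The conclusion then follows from Lemma~\ref{lem:sparseb3dsystake2} applied to the displayed system with $\alpha=\kappa\theta$ and $\beta=1/\mu_{\min}$: each interior block satisfies $\|\Delta_i\|\le\alpha\beta\,\|\Delta_0\| = \tfrac{\kappa\theta}{\mu_{\min}}\|\Delta_0\|$ for $i=1,\dots,B$, and substituting the identity $\kappa(1-\delta)=\mu_{\min}$ — immediate from the definitions of $\kappa$ and $\delta$ in Theorem~\ref{thm:bondcnvgrad} — rewrites this as $\|\Delta_i\|\le\tfrac{\theta}{1-\delta}\|\Delta_0\|$, which is exactly the claim.

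The main difficulty I anticipate is not the algebra but the careful bookkeeping needed to certify that the averaged Hessian genuinely has the structure used above: that its interior-interior block stays uniformly positive definite ($\succeq\mu_{\min}\mId$) all along the averaging path — which rests on strong convexity holding globally on $\R^n\times\R^n$ — and that the first-to-interior coupling block is nothing more than an average of $\mE_{T-B}(\cdot)$, which is precisely where the partial separability of $J_{T_B}$ (each $f_t$ touching only two consecutive frames) enters. Once that structural verification is done, the estimate is a direct appeal to Proposition~\ref{prop:solvWfixVar} and Lemma~\ref{lem:sparseb3dsystake2}.
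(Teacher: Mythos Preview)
Your proposal is correct and follows essentially the same route as the paper: both invoke Proposition~\ref{prop:solvWfixVar} and the truncated formulation~\eqref{eq:buff opt prblm} to recognize the two solutions as minimizers of the same $J_{T_B}$ with different boundary values, linearize the difference of their first-order conditions via the gradient theorem~\eqref{eq:gradthm} to obtain a block-tridiagonal system with zero right-hand side in the interior blocks, and then apply Lemma~\ref{lem:sparseb3dsystake2}. The only cosmetic difference is that the paper works with the $\kappa^{-1}$-scaled system (taking $\alpha=\theta$, $\beta=(1-\delta)^{-1}$) whereas you work unscaled ($\alpha=\kappa\theta$, $\beta=1/\mu_{\min}$) and then invoke $\kappa(1-\delta)=\mu_{\min}$; the product $\alpha\beta$ is the same either way.
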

We give the proof of the Lemma in Appendix \ref{sec:lemtruncupdateproof}. 
Theorem \ref{thm:cnvxbufferrbnd} below builds on the results of the Lemma to establish error bound between $ \{\vx_t^*\} $ and $ \{\xfstar_t \}$ as a function of $ B $. 
\begin{theorem}	
	\label{thm:cnvxbufferrbnd}	
	Let $ \{\vx_t^*\}$ denote the sequences of untruncated asymptotic solutions, and  $\{\vz^*_t\} $ the final truncated solutions for a buffer size $B$. 
	Under the conditions of Theorem \ref{thm:bondcnvgrad}, we have
	\begin{equation*}\label{eq:buff err UB}
		\|\vx_t^* - \xfstar_t\|  \leq 
		C_b \left(\dfrac{2L_{\max} -\mu_{\min}}{2L_{\max} +\mu_{\min}}\right)^{B}
	\end{equation*}
	for some positive constant $ C_b(\mu_{\min},L_{\max} )$.  
\end{theorem}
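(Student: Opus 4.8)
The plan is to reduce everything to a single scalar recursion for the per-frame truncation error $a_t := \|\vx_t^* - \vz_t^*\|$ and to exploit the fact that, under the block-diagonal-dominance condition $\theta < (1-\delta)/2$ of Theorem~\ref{thm:bondcnvgrad}, the map carrying the truncation error from one frame to the next is a contraction. Write $r := \frac{2L_{\max}-\mu_{\min}}{2L_{\max}+\mu_{\min}}$ and $q := \theta/(1-\delta)$, so the hypothesis gives $q < 1/2$. Since the discussion following Theorem~\ref{thm:bondcnvgrad} shows that the solution bound \eqref{eq:solutionbound} furnishes a uniform gradient bound of the form \eqref{eq:unigradbound}, Theorem~\ref{thm:convergence of the updates} is in force and I may use $\|\hat\vx_{t|T} - \vx_t^*\| \leq C_1 r^{T-t}$ freely, with $C_1$ depending only on $\mu_{\min}, L_{\max}$ and the fixed problem data $M_x, \theta$.

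First I would pin down the ``freezing time'' of each truncated frame. From \eqref{eq:buff opt prblm}, frame $t$ is re-optimized exactly at times $T = t, t+1, \dots, t+B-1$, so $\vz_t^* = \hat\vz_{t|t+B-1}$. For $T \leq B-1$ the buffered problem \eqref{eq:buff opt prblm} has its left endpoint at $t' \leq 0$ and therefore coincides with the full program $J_T$, so $\hat\vz_{t|T} = \hat\vx_{t|T}$ in that range; in particular $\vz_0^* = \hat\vx_{0|B-1}$ and hence $a_0 \leq C_1 r^{B-1}$. This base case is already exponentially small in $B$, which is what makes the final estimate uniform in $t$ rather than merely asymptotic.

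Next I would set up the recursion. Apply Lemma~\ref{lem:truncdepbndtake2} at the freezing time $T = t+B-1$ (so $T-B = t-1$ and $T-B+i = t$ for $i=1$): both the restriction of the full minimizer of $J_T$ to frames $t, \dots, T$ and the truncated solve at time $T$ minimize the same objective $J_{T_B}$ of \eqref{eq:JT-B}, differing only in the boundary value ($\hat\vx_{t-1|T}$ versus $\vz_{t-1}^*$), so the lemma yields $\|\hat\vx_{t|t+B-1} - \vz_t^*\| \leq q\,\|\hat\vx_{t-1|t+B-1} - \vz_{t-1}^*\|$. Inserting $\|\hat\vx_{t|t+B-1} - \vx_t^*\| \leq C_1 r^{B-1}$ and $\|\hat\vx_{t-1|t+B-1} - \vx_{t-1}^*\| \leq C_1 r^{B}$ through the triangle inequality gives
\[
a_t \leq C_1 r^{B-1}(1+qr) + q\,a_{t-1}, \qquad t \geq 1.
\]
Unrolling this geometric recursion from $a_0 \leq C_1 r^{B-1}$ and using $q < 1$,
\[
a_t \leq \frac{C_1 r^{B-1}(1+qr)}{1-q} + q^t a_0 \leq C_1 r^{B-1}\!\left(\frac{1+qr}{1-q}+1\right),
\]
which is of the stated form $C_b\, r^{B}$ after absorbing the factor $r^{-1}$ into the constant; $C_b$ inherits dependence only on $\mu_{\min}, L_{\max}$ through $r$, $q$, and $C_1$.

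The main obstacle, and the place where this argument genuinely departs from the least-squares Corollary~\ref{corr:ls:buffer error bound}, is the middle step: in the convex case freezing frame $t-B$ perturbs every later truncated estimate, so the errors propagate forward rather than staying put, and one must show this propagation does not accumulate. That is exactly the role of Lemma~\ref{lem:truncdepbndtake2}, whose contraction factor $q = \theta/(1-\delta) < 1/2$ --- a direct consequence of block diagonal dominance --- is what makes the scalar recursion above stable with fixed point of order $r^B$. The remaining effort is bookkeeping: correctly identifying which of $\hat\vx, \hat\vz, \vx^*, \vz^*$ sits at which time index, and checking the base case so that the bound holds for all $t$, including the first few frames where no truncation has yet occurred.
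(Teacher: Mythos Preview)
Your proof is correct and follows essentially the same route as the paper: identify the freezing time $T=t+B-1$, invoke Lemma~\ref{lem:truncdepbndtake2} to get a contraction with factor $q=\theta/(1-\delta)$, and solve the resulting scalar recursion. The only cosmetic differences are that the paper splits $\|\vx_t^*-\vz_t^*\|$ into $e_f+e_b$ and recurses on $e_b=\|\hat\vx_{t|t+B-1}-\vz_t^*\|$ (pivoting on $\hat\vx_{t-1|t+B-2}$ and the single-step bound \eqref{eq:expon decaying update}), whereas you recurse directly on $a_t=\|\vx_t^*-\vz_t^*\|$ (pivoting on $\vx_{t-1}^*$ and the full bound \eqref{eq:linearconvxt}); both lead to the same geometric fixed point of order $r^B$.
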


Theorem \ref{thm:cnvxbufferrbnd} shows that, again, the truncation error shrinks exponentially as we increase the buffer size, where the shrinkage factor depends on the convex conditioning of the problem. 
In the next section, we leverage this result to derive an online truncated Newton algorithm.

\section{Newton algorithm}
\label{sec:nwton alg}
\noindent
In this section, we present an efficient truncated Newton online algorithm (NOA). 
The main result enabling this derivation stems from the bound given in Theorem \ref{thm:cnvxbufferrbnd} in Section \ref{sec:cnvx case}, and from the Hessian's block-tridiagonal structure. It implies that we can compute each Newton step using the same forward and then backward sweep we derived for the least-squares. Moreover, Theorem \ref{thm:cnvxbufferrbnd} implies that the solve can be implemented with finite memory with moderate computational cost that does not change in time. Hence, our proposed algorithm sidesteps the otherwise prohibiting complexity hurdle associated with Newton method while maintaining its favorable quadratic convergence.

\subsection{Setting up with truncated updates}
\noindent
\begin{assumption}\label{ass:gen NW ass}
	$ \nabla^2  J_T $ is $ M$-Lipschitz continuous.
\end{assumption}
Assumption \ref{ass:gen NW ass} is standard when considering Newton’s method. Combined with the assumptions stated in Section \ref{sec:cnvx case} we have sufficient conditions to guarantee the (local) quadratic convergence of Newton's algorithm (see, e.g., \cite{ortega2000iterative}). For smooth convex functions, the first-order optimality condition implies the solution to the nonlinear system $	\nabla  J_T(\xbarhat_T) = 0$ is also the optimal solution to \eqref{eq:def:general problem formulation}, hence solving \eqref{eq:buff opt prblm} is equivalent to solving 
\begin{equation}\label{eq:trunc nw def}
 F (\xfhat_{T-B+1},\dots,\xfhat_{T} ) = \mzero,
\end{equation}
where 
\[ F(\xfhat_{T-B+1},\dots,\xfhat_{T} ) = \nabla J_{T_B}(\xfstar_{T-B}, \xf_{T-B+1},\dots,\xf_{T} ).\]

\subsection{The Newton Online Algorithm (NOA)}
\noindent
To avoid confusion with the batch estimates, we use  $ \{\ybar\nw{k}\}$ to denote the sequence of updates obtained using the Newton solutions. The three main blocks of the algorithm:system update, initialization, and computation of the Newton step are described next.

\subsubsection{Newton system}
The Newton method solves the system in \eqref{eq:trunc nw def} iteratively. Starting with initial guess $ \ybar\nw{0} $, at every iteration, we update with
\begin{alignat*}{2}
	&\sbar\nw{k}\hspace{4mm} = - \mF'(\ybar\nw{k})^{-1}   F(\ybar\nw{k}),
	 \label{eq:nw update} \\
	&\ybar\nw{k+1}  = \ybar\nw{k} + \tau\nw{k}\sbar\nw{k},
\end{alignat*}
$ \tau\nw{m} \in (0,1]$ controls the step, , ensuring global convergence.

\subsubsection{Initialization}
The smoothness of the objective implies a warm-start is a reasonable choice. Hence, we initialize as
\begin{equation}\label{eq:nw init}
	\bvy\nw{0}_t := \xfhat_{t|T-1},  \qquad 0\leq t\leq T-1,
\end{equation} 
where for the new block-variable,$\vy_{T}\nw{0}$, we recommend using one of the two options proposed in   \eqref{eq:initnewvar} or \eqref{eq:def:local sol}.

\subsubsection{Computing the Newton step}

$ \mF' $ has the same block-tridiagonal structure as in the LS, meaning we can compute the Newton step with the same LU forward-backward solver derived in Section \ref{sec:ls}. However, a big difference is, for nonlinear systems, the factorized $ \mQ_t $ and $ \mU_t $ blocks are no longer stationary; updating $ \ybar\nw{k}$ updates $ \mF' $ and hence updates the factorization blocks. In other words, except for the first step where the change in $ \mF' $ is still local, we need to compute a new LU-factorization for every Newton step. 
\footnote{ For $ \sbar\nw{0} $, the initialization means only the last LU blocks change so we can carry most blocks  from the previous solve.}

Although every step involves recomputing the factorization, the exponential convergence of the updates means that we can get away with good accuracy even for a small B, which, together with the sparsity of the Hessian, implies a very reasonable per-iteration complexity of the order $ \ordof{3Bn^3}$, independent of $T$, and overall complexity of  $ \ordof{3Bn^3 k_{nw}}$, where $k_{nw} $ is the total number of Newton iterations required.
\iftrue

\begin{algorithm}[!t]
	\caption{Newton online algorithm (NOA) }
	\label{alg:NOA}
	\alglanguage{pseudocode}
	\algblock{If}{EndIf}
	\algcblock[If]{If}{ElsIf}{EndIf}
	\algcblock[Ife]{If}{Else}{EndIf}
	\begin{algorithmic}[1]
		\For {$T=1,2,\dots \quad  $}  \COMMENT{For each time step}
		\State Get new loss term $f_T(\cdot )$
		\vspace{1mm}
		\State Initialize $ \ybar\nw{0}$ \COMMENT{ see \eqref{eq:nw init} \label{ONW:init}}
		\vspace{1mm}
		\State \textit{Update Hessian factorization}:\\
		\hspace{0.8cm} $F_{T-1}, \mH_{T-1}, \mH_{T} ,\mE_{T-1}.$
		\vspace{1mm}
		\State $  k\gets 0  $ 
		\While{$ \norm{\nabla f(\vy\nw{k})}^2  \geq \epsilon_0$}:  \COMMENT{Newton's iteration}
		\vspace{1mm}
		\State Set $ t_s = 
		\begin{cases}
			T-1, & k=0\\
			T-B+1, & \mathrm{otherwise}  
		\end{cases} $
		\vspace{1mm}
		\State $ \mQ_{t_s} \gets \mH_{t_s}$  ;  $ \vv_{t_s} \gets \mQ_{t_s}^{-1} F_{t_s}$  \COMMENT{fwd}
		\For{$ t=t_s+1,\cdots ,T$}:  
		\State $ \mU_{t-1}\gets  \mQ_{t-1}^{-1}\mE_{t-1}^T$ ; $ \mQ_t \gets \mH_{t}-\mE_{t-1} \mU_{t-1} $ 
		\State $ \vv_t \gets  \mQ_t^{-1}\left( F_{t} - \mE_{t-1} \vv_{t-1} \right) $	
		\EndFor	
		
		\vspace{1mm}
		\State  $ \vs_{T} \gets \vv_{T} $ \COMMENT{bwd}
		\For{$ t=T-1, T-2,\cdots ,T-B+1$}
		\State $ \vs_{t} \gets \vv_t - \mU_t \vs_{t+1}$	
		\EndFor \label{NOA:LU ends}
		\vspace{1mm}
		\State $ \ybar\nw{k+1} \gets \ybar\nw{k} +  \tau\nw{k} \sbar $ \COMMENT{Newton step}
		\vspace{1mm}
		\State Reevaluate $[ F(\ybar\nw{k+1}) , \mF'(\ybar\nw{k+1})]$
		\State $ k\gets k+1 $
		\EndWhile
		\vspace{1mm}
		\State $ \xbarhat_{T} \gets \ybar\nw{k} $  \COMMENT{update global solution}
		\EndFor		
	\end{algorithmic}
\end{algorithm}
\fi
\section{Numerical examples}
\label{sec:numerical example}
\noindent
In this section we consider two working examples to showcase the proposed algorithms. In the first example, we use Algorithm \ref{alg:Dynamic strm LS} from the least-squares section for streaming reconstruction of a signal from its level crossing samples, and in the second example, we use NOA from Section \ref{sec:nwton alg} to efficiently solve neural spiking data regression.


\subsection{Online signal reconstruction from non-uniform samples}
\label{subsec:lvl cros}
\noindent
Inspired by the recent interest in level-crossing analog-to-digital converters (ADCs), we consider the problem of constructing a signal from its level crossings.  Rather than sample a continuous-time signal $x(t)$ on a set of uniformly spaced times, level-crossing ADCs output the times that $x(t)$ crosses one of a predetermined set of levels.  The result is a stream of samples taken at non-uniform (and signal-dependent) locations.  An illustration is shown in Figure~\ref{fig:3figs}.
%
The particulars of the experiment are as follows.  A bandlimited signal was randomly generated by super-imposing sinc functions (with the appropriate widths) spaced $1/64$ apart on the interval $[-5,21]$; the Sinc functions' heights were drawn from a standard normal distribution.  The level crossings were computed for $L=16$ different levels equally spaced between $[-2.5,2.5)$ in the time interval $[-0.25,16.25]$.  This produced $4677$ samples.  

The signal was reconstructed using the \textit{lapped orthogonal transform} (LOT). We applied the LOT to cosine IV basis functions resulting in a set of $ 16 $ frames of orthonormal basis bundles, each with $N=75$ basis functions and transition width $\eta=1/4$. 
A single sample $x(t_m)$ in batch $k$ (so $t_m\in\setT_k$) can then be written in terms of the expansion coefficients in frame bundles $k-1$ and $k$ as 
\begin{align}
\nonumber x(t_m) &= \sum_{n=1}^N\vx_{k-1,n}\psi_{k-1,n}(t_m) + \sum_{n=1}^N\vx_{k,n}\psi_{k,n}(t_m) \\\nonumber
\label{eq:sampleab} &= \<\vx_{k-1},\vb_m\> + \<\vx_k,\va_m\>, 
\end{align} 
where
$\vx_{k-1},\vx_k\in\R^N$ are the coefficient vectors (across all $N$ components) in bundles $k-1$
and $k$, and $\va_m,\vb_m\in\R^N$ are samples of the basis functions at $ t_m $(and are independent of the
actual signal $x(t)$).
Collecting the corresponding measurement vectors $\va_k,\vb_k$ for all $M_k=|\setM_k|$ samples in batch $k$ together as rows gives the $M_k\times N$ matrices $\mA_k$ and $\mB_k$ from  \eqref{eq:flstik} , and we can use Algorithm \ref{alg:Dynamic strm LS} for the signal reconstructed using . \\

The results in Figure \ref{fig:3figs} show the reconstructed signal at three consecutive time frames. Table \ref{tab:lagerror} gives a more detailed accounting of the reconstruction error for different buffer lengths.
We can see that a buffer of three frames  already yields seven digits of accuracy.  Hence using $B=3$ in the truncated backwards update in Section~\ref{subsec:ls:trunc strm} will match the performance of a full reconstruction in Algorithm~\ref{alg:Dynamic strm LS} almost exactly. 
\begin{figure*}[!ht]
	\centering
	\subfloat[]{\includegraphics[width=0.252\textwidth]{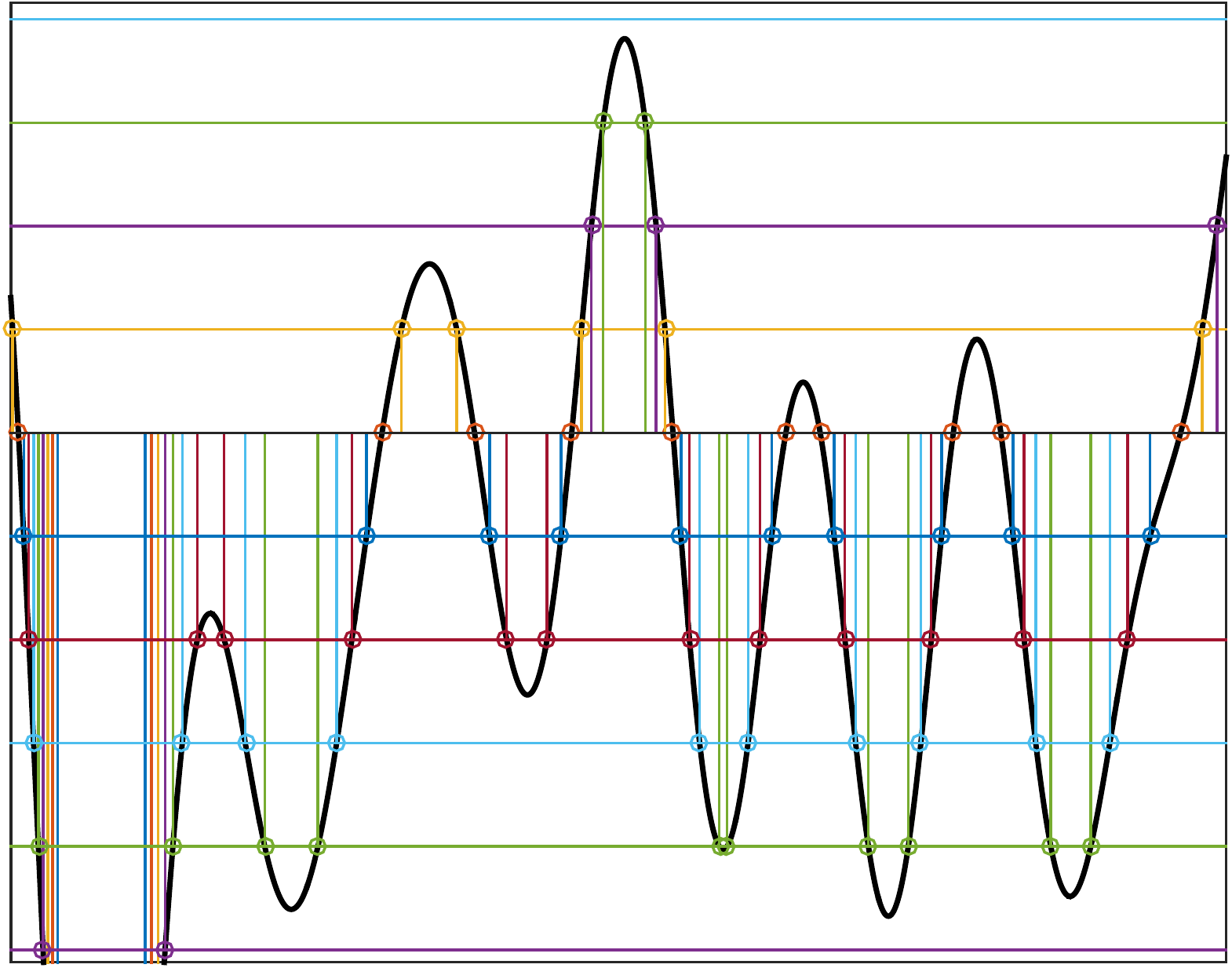}}
	\subfloat[k=4]{\includegraphics[width=0.25\textwidth]{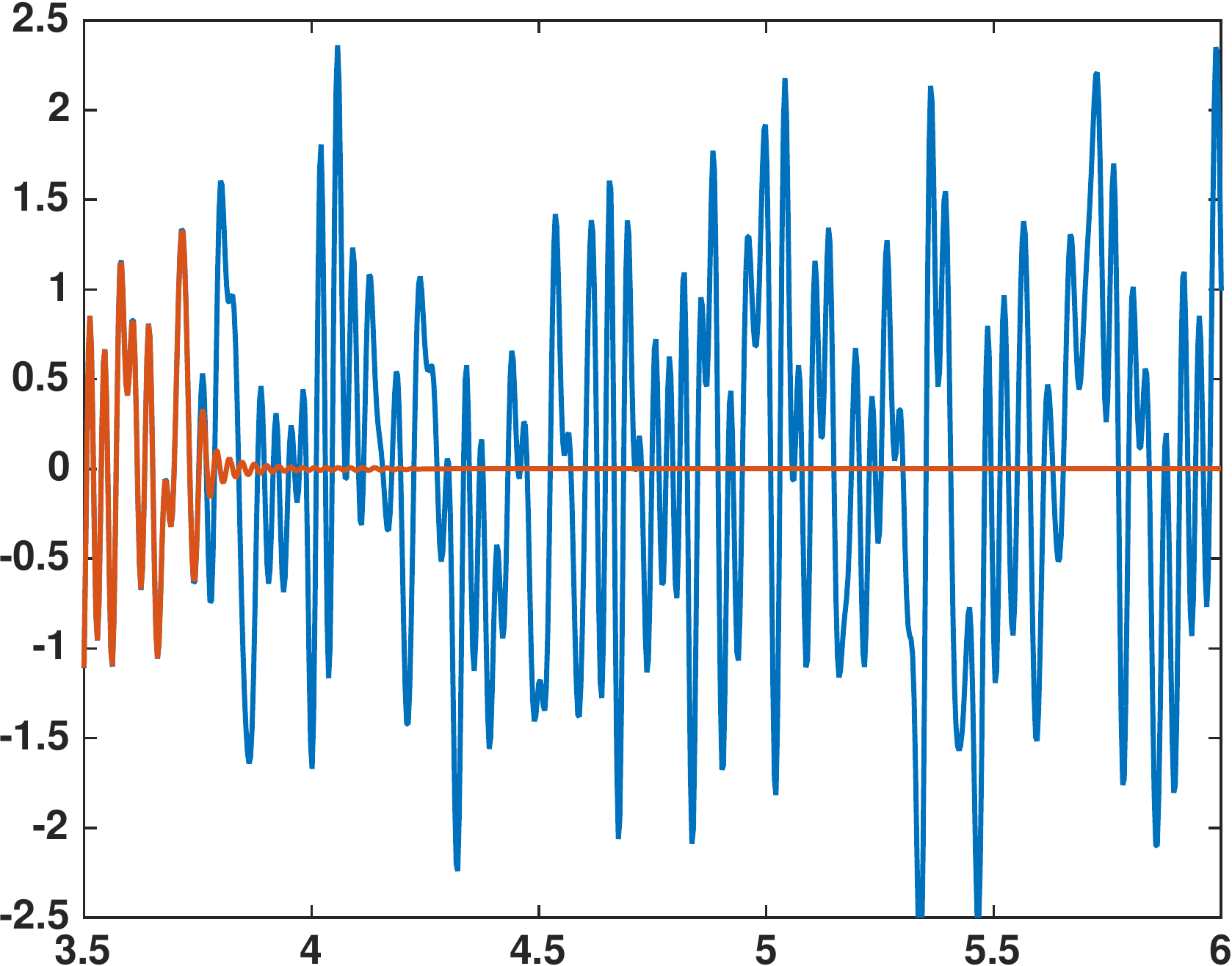}}
	\subfloat[k=5]{\includegraphics[width=0.25\textwidth]{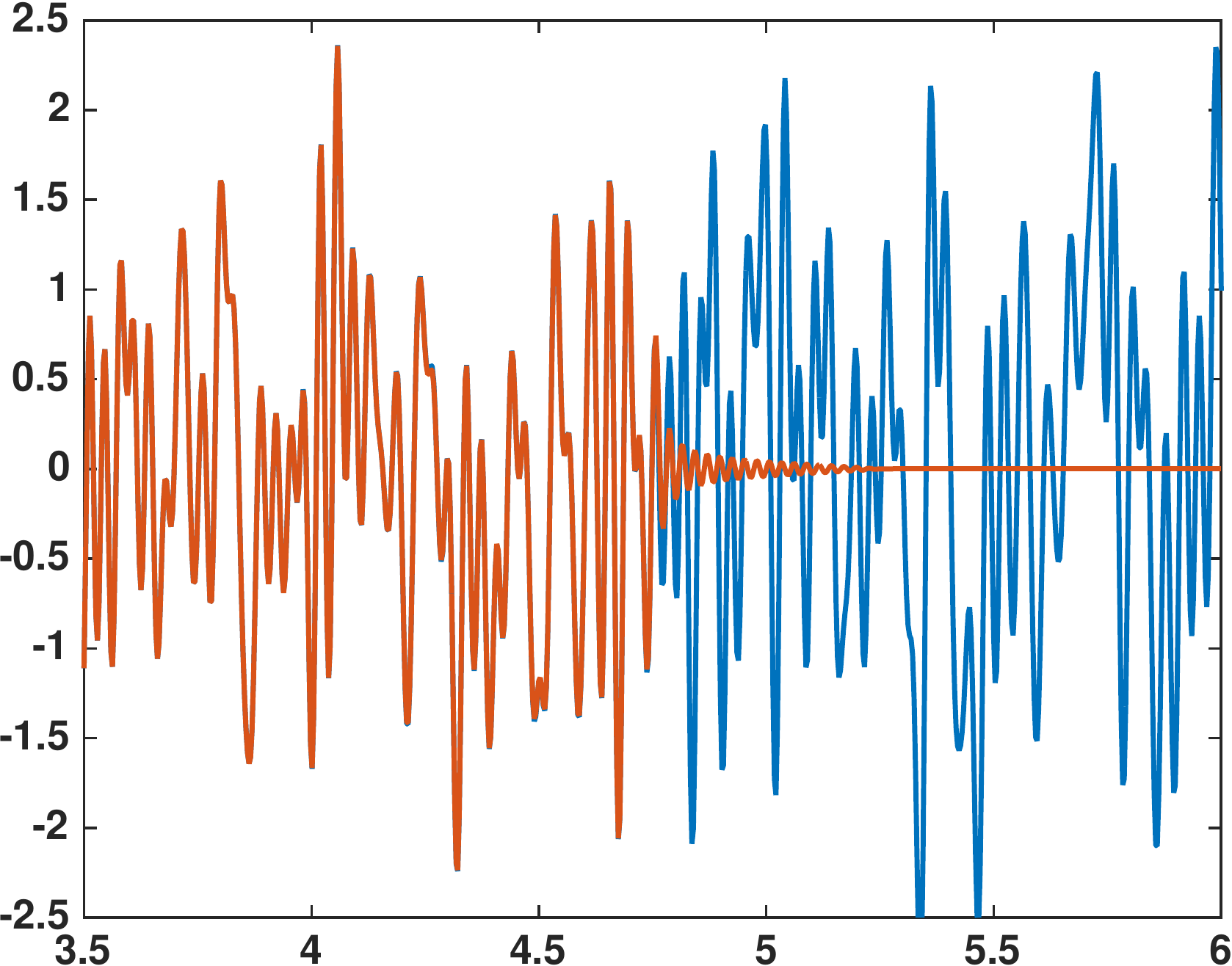}}
	\subfloat[k=6]{\includegraphics[width=0.25\textwidth]{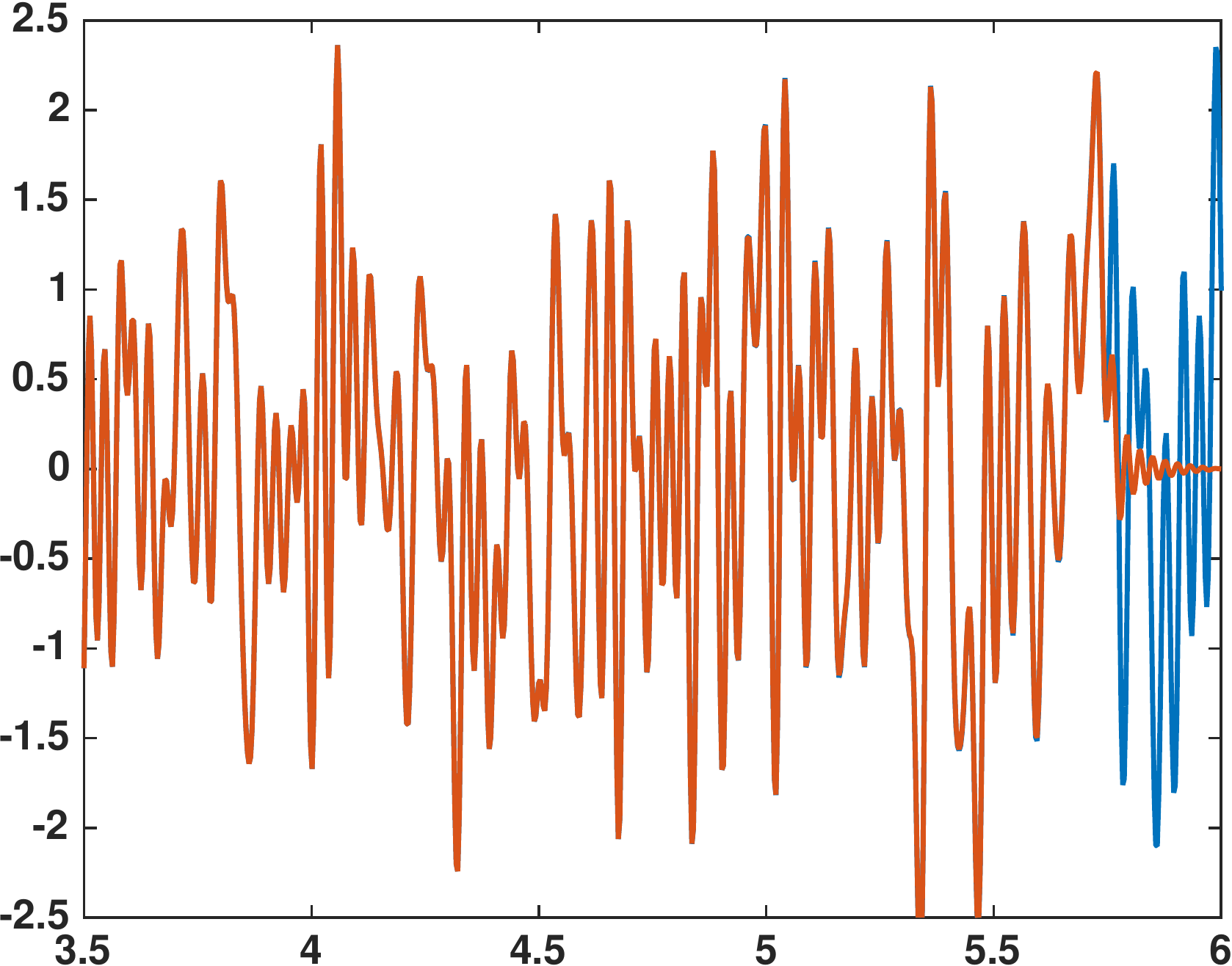}}
	\caption{(a) Level crossing samples; (b)-(d) The original signal (blue) and the reconstructed signal (orange) at time steps $k=4,5,6$.  
	}
	\label{fig:3figs}
\end{figure*}

\begin{table}[!hbt]
\begin{minipage}[b]{1.0\columnwidth}
	\caption{\small\sl The entries in the chart below tabulate how close our estimate of
	$\hat\vx_{j|k}$ is to the final least-squares estimate $\vx_{j}^*$ in a typical
	reconstruction problem.  The numbers below are
	$\log_{10}(\|\hat\vx_{j|k}-\vx_{j}^*\|/\|\vx_{j}^*\|)$.  The convergence is
	extremely rapid; after just three frames, we have achieved seven digits of accuracy.  This means
	that terminating the loop in Algorithm~\ref{alg:Dynamic strm LS} at $k=3$ costs us almost nothing in terms of
	reconstruction performance.
	}
	\setlength{\tabcolsep}{5.5pt}
	\begin{tabular}{c|cccccccc}
		& $j=4$ & $j=5$ & $j=6$ & $j=7$ & $j=8$ & $j=9$ & $j=10$\\\hline
		$k=4$ & -0.31 &  --- &  --- &  --- &  --- &  --- &  ---  \\ 
		$k=5$ & -3.39 & -0.32 &  --- &  --- &  --- &  --- &  ---  \\ 
		$k=6$ & -5.12 & -3.24 & -0.32 &  --- &  --- &  --- &  --- \\ 
		$k=7$ & -7.28 & -5.08 & -3.46 & -0.27 &  --- &  --- &  ---  \\ 
		$k=8$ & -9.27 & -7.08 & -5.60 & -3.44 & -0.34 &  --- &  ---  \\ 
		$k=9$ & -10.84 & -8.65 & -7.17 & -5.19 & -2.48 & -0.22 &  ---  \\ 
		$k=10$ & -13.27 & -11.08 & -9.60 & -7.62 & -4.90 & -3.44 & -0.36  \\ 
	\end{tabular}
	\label{tab:lagerror}
\end{minipage}

\end{table}

\subsection{Nonlinear regression of intensity function}
\label{subsec: nhpp numerical ex}
\noindent
In this example, we consider the problem of estimating neuron’s intensity function, which amounts to a nonlinear convex program: recover a signal from non-uniform samples of a non-homogeneous Poisson process (NHPP). Estimating the rate function is a fundamental problem in neuroscience \cite{Lewicki1998},
as the spikes' temporal pattern encodes information that characterizes the neuron's internal properties. A standard model for neural spiking under a rate model  \cite{rabinowitz2015attention,zhao2017variational,truccolo2005point} assumes that, given the rate function, the spiking observations follow a Poisson likelihood \cite{brown2002time}.

The formulation of the intensity function as optimization program is as follows. Given time series of spikes $\mathcal{H}_T =\{\tau_1,\dots,\tau_m\}$ in $[0,T]$, generated by $\mathcal{H}_T|\lambda(t)\sim \mathrm{Poisson}(\lambda(t))$, we want to estimate the underlying rate function $\lambda(t)$ using the maximum likelihood estimator
\cite{deneve2008bayesian, Brown2004, Brillinger1988}.
The associated optimization program is 
\begin{equation}\label{eq:spikesneglogobj}
		\hat \lambda(t)  
	 =  \arg\min_{\lambda(t)}   \int_{0}^T \lambda(t) dt - \sum_{i=1}^m \log(\lambda(\tau_i)).
\end{equation}

To make \eqref{eq:spikesneglogobj} well-posed, we need to introduce a model for $\lambda(t)$.  The straightforward one we use here is to write $\lambda$ as a superposition of basis functions, $ \lambda(t) = \sum_i \vx_i \psi_i(t)$.  Similar models which also fit into our streaming framework can be found in the literature.  For example,
\cite{ahmadi2018} uses Gaussian kernels while dynamically adapting the kernel bandwidth, \cite{dimatteo2001bayesian} uses splines for which the number and the location of knots are free parameters. Assuming a Gaussian prior on $ \lambda(t)$, \cite{solin2018infinite} divides the problem into small bins with each bin being a local Poisson and use Gaussian kernels.  Also related is the Reproducing Kernel Hilbert Space (RKHS) formulation in \cite{flaxman2017poisson}, %
and finally \cite{chakraborty2020sparse} shows that \eqref{eq:spikesneglogobj} can be  solved using a second-order variants of stochastic mirror descent based on ideas from \cite{flaxman2017poisson}.

In the example here we will consider parameterization of the intensity function using splines. Splines, in particular (cardinal) B-splines, have been successfully used to capture the relationship between the neural spiking and the underlying intensity function \cite{kass2003statistical, sarmashghi2021efficient}. B-splines have properties (see, e.g. \cite{de1966splines,de1978practical}) that make them favorable for this particular application. They are non-negative everywhere, and so restricting $ \vx $ to $ \R_+ $ guarantees $ \hat\lambda(t) \geq 0$. 
They have compact support (minimal support for a given smoothness degree), which nicely breaks the basis functions to overlapping frames. In addition, they are convenient to work with from the point of view of numerical analysis.
The results below were obtained with second-order B-splines, but higher models can be obtained in the same way.

 We  divide the time axis into short intervals (frames) and associate with each frame $N$ B-splines. The frames' length is set such that for each $t$,  $\lambda(t)$ is expressed by basis functions from at most two frames (see \cite{hamam2019second} for additional details). 

To cast \eqref{eq:spikesneglogobj} into the form of \eqref{eq:def:general problem formulation}, we define the local loss functions $ f_t$ as 
\begin{multline*}
	f_t\left(\vx_{t-1}, \vx_{t}\right) = 
	\left\langle\vx_{t}, \va_{t}\right\rangle + \left\langle\vx_{t-1}, \vb_{t}\right\rangle  \\
	- \sum_{m} \log \left(\langle\vx_{t}, \vc_{t,m}\rangle + 
	\langle\vx_{t-1}, \vd_{t,m}\rangle  \right),
\end{multline*}
where
$\vx_{k-1},\vx_k\in\R^N$ are the coefficient vectors in the $(k-1)$ and $k$ frames, and $\va_k,\vb_{k}\in\R^N$ are the basis functions from the same frames integrated over the $ k$th frame:
\[
\va_k = 
\int_{T_{k-1}}^{T_k}
\begin{bmatrix}
	\psi_{k,1}(t) \\
	\psi_{k,2}(t) \\
	\vdots \\
	\psi_{k,N}(t) 
\end{bmatrix} dt,
\quad
\vb_{k} = 
\int_{T_{k-1}}^{T_k}
\begin{bmatrix}
	\psi_{k-1,1}(t) \\
	\psi_{k-1,2}(t) \\
	\vdots \\
	\psi_{k-1,N}(t)
\end{bmatrix}dt,
\]
and $ \vc_{k,m} , \vd_{k,m} \in \R^N $ are samples of the basis functions at events observed during the $ k$th frame:
\[
\vc_{k,m} = 
\begin{bmatrix}
	\psi_{k,1}(\tau_m) \\
	\psi_{k,2}(\tau_m) \\
	\vdots \\
	\psi_{k,N}(\tau_m) 
\end{bmatrix},
\quad
\vd_{k,m} = 
\begin{bmatrix}
	\psi_{k-1,1}(\tau_m) \\
	\psi_{k-1,2}(\tau_m) \\
	\vdots \\
	\psi_{k-1,N}(\tau_m)
\end{bmatrix},
\]
with $M_k =\{ m: \tau_m\in [T_{k-1}, T_k) \} $.

The MLE optimization program can then be rewritten as  
\begin{equation}\label{eq:spikesfinaloptformu}
	 \underset{(\vx_0,\dots,\vx_{T})}{\arg\min}  \sum_{t=1}^T f\left(\vx_{t-1}, \vx_{t}\right), \quad \text{s.t. } \bvx_T \geq \mzero 
\end{equation}

The condition $\bvx_T \geq \mzero$ is set to ensure $ \hat\lambda(t)\geq 0 ~\forall t $.
We solve \eqref{eq:spikesfinaloptformu} with the NOA algorithm described in Section \ref{sec:nwton alg} with the following modification. We use log-barrier modification as in \cite[\S 11 ]{boyd2004convex}, since NOA was originally derived for unconstrained optimization problems. 

The simulation data was created by generating random smooth intensity function $ \lambda^*(t) $, which was then used to simulate events ("spikes") following non0-homogeneous Poisson distribution using the standard  thinning method \cite{lewis1979simulation}. Focusing on the effect of online estimation, we set the "ground truth" reference as the batch solution, to which we then compare the online estimates. 

The rapid convergence of the updates is depicted in Figure \ref{fig:res_time},
and in Figure \ref{fig:nhpp:buff error} we show the effect of truncating the updates on the solution accuracy.
\begin{figure}[!t]
	\centering
	\includegraphics[width=0.9\linewidth]{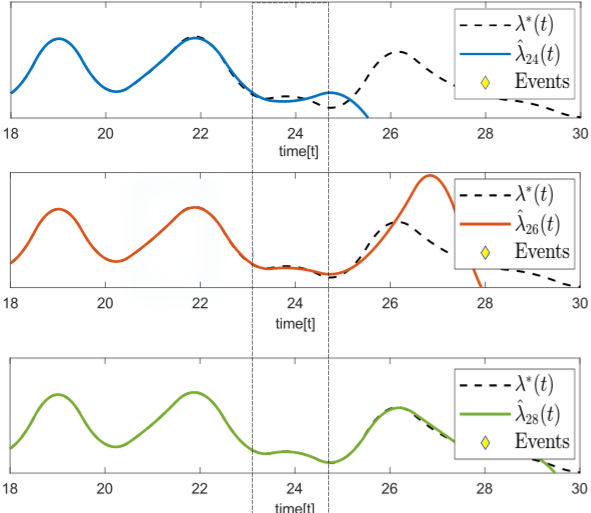}
	\caption{Illustration of the convergence of the corrections as they backpropagate. The figures show, from top to bottom,   $\lambda(t)$ at $T=24, 26, 28$. Following the dashed rectangles, moving down the plots shows the convergence to $\lambda^*(t)$ (the dashed line).}
	\label{fig:res_time}
\end{figure}

\begin{figure}[!t]
	\centering 
		\includegraphics[width=1.0\linewidth]{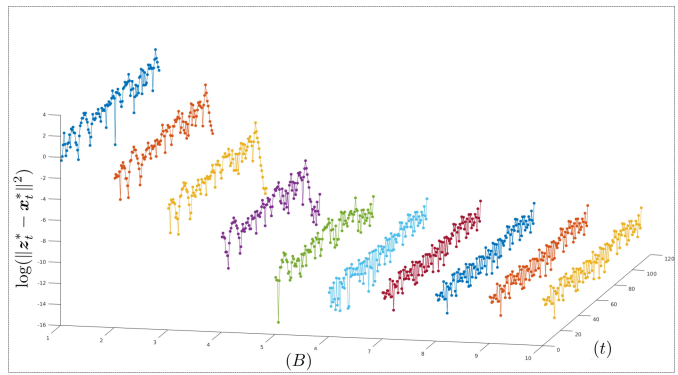}
	\caption{3D illustration of the truncation error for various buffer sizes. The (log) magnitude of the asymptotic error is plotted for different buffer sizes against time; for a fixed B, the error displays uniformly distributed in time without aggregation. It decreases exponentially as B increases, as predicted by the theoretical analysis. }
	\label{fig:nhpp:buff error}
\end{figure}

\section{Summary and future work}
\noindent
This paper focused on optimization problems expressed as a sum of compact convex loss functions that locally share variables. 
We have shown that the updates converge rapidly under mild conditions that correspond to the variables’ loose-coupling.
The main impact of the convergence result for the updates is that it allows us to approximate the solution via early truncating the updates with a negligible sacrifice of accuracy.
Driving these results, the primary underlying mechanism that resurfaced throughout was the block-tridiagonal structure of the Hessian. Rising through the structure of the derivatives of the loss functions, the block-tridiagonal structure led to efficient numerical algorithms and played a prominent role in the analysis and proofs to many results in this paper.  Future work includes extending these results to problems associated with more general variable dependency graphs and optimization programs with local constraints.


\appendix 


\subsection{Proof of Lemma~\ref{lm:xtbound}}
\label{sec:xtboundproof}
\noindent
Our proof uses the simple fact that a recursive set of inequalities of the form,
\begin{equation}\label{eq:contractiverecursiveseq}
		z_0 \leq b, \quad z_t\leq b + az_{t-1},\quad a,b\geq 0,
\end{equation}

will also obey, if $a<1$, 
\[
	z_t\leq b\left(\frac{1-a^{t+1}}{1-a}\right) \leq \frac{b}{1-a}.
\]
In the forward sweep in \eqref{eq:forwardsweep}, we have 
\begin{align*}
	\|\vv_0\|&= \|\mQ_0^{-1}\vg_0\|\leq M/(1-\eps_*), 
\end{align*}
and then for $t=1,\ldots,T-1$,
\begin{align*}
    \|\vv_t\|&= \|\mQ_t^{-1}(\vg_t-\mE_{t-1}\vv_{t-1})\| \leq M/(1-\eps_*) + \rho\|\vv_{t-1}\|\\
	& \Downarrow \\
	\|\vv_t\|&\leq \frac{M}{(1-\eps_*)(1-\rho)} =: M_v.
\end{align*}
For the backward sweep in \eqref{eq:backwardsweep}, we have
\begin{align*}
	\|\vx_T\| &= \|\vv_T\|  \leq M_v, 
\end{align*}
and then for $t=T-1,\ldots,0$,
\begin{align*}
	\|\vx_{t}\| &= \|\vv_t - \mQ_{t}^{-1}\mE_t^\T\vx_{t+1}\| \leq M_v + \rho\|\vx_{t+1}\| \\
	& \Downarrow \\
	\|\vx_t\| &\leq M_v\left(\frac{1-\rho^{T-t+1}}{1-\rho}\right)\\
	&= \frac{M(1-\rho^{T-t+1})}{(1-\eps_*)(1-\rho)^2}
	\leq \frac{M}{(1-\eps_*)(1-\rho)^2}.
\end{align*}

\subsection{Proof of Theorem~\ref{thm:ls convergence}}
\label{sec:lsconvproof}
\noindent
We start with a simple relation that connects the correction in frame bundle $t$ to the correction in
bundle $t-1$ as we move from measurement batch $T\rightarrow T+1$.  From the update
equations, we see that for $t\leq T$, 
\begin{equation*}
\hat\vx_{t-1|T+1} - \hat\vx_{t-1|T} =
-\mU_{t-1}\left(\hat\vx_{t|T+1}- \hat\vx_{t|T}\right), 
\end{equation*}
\begin{align*}
\|\hat\vx_{t-1|T+1} - \hat\vx_{t-1|T}\| &\leq \|\mU_{t-1}\|\cdot\|\hat\vx_{t|T+1}-
\hat\vx_{t|T}\| \\ &\leq \frac{\theta}{1-\epsilon_*}\cdot\|\hat\vx_{t|T+1}-
\hat\vx_{t|T}\|,
\end{align*} 
where we used the fact that $  \|\mU_{t-1}\| =  \| \mQ_{t-1}^{-1}\mE_{t-1}^T\|\leq(1-\epsilon_*)^{-1}\theta  $.
Applying this bound iteratively and using $t=T$, we can bound the correction error $\ell$ frames back as
\begin{equation}\label{eq:single step bnd}
 \|\hat\vx_{T-\ell|T+1} - \hat\vx_{T-\ell|T}\| ~\leq~
\left(\frac{\theta}{1-\epsilon_*}\right)^\ell\, \|\hat\vx_{T|T+1}- \hat\vx_{T|T}\|. 
\end{equation}
This says that the size of the update decreases {\em geometrically} as it back propagates through
previously estimated frames.  We can get a uniform bound on the size of the initial update $\|\hat\vx_{T|T+1}- \hat\vx_{T|T}\|$ by using Lemma~\ref{lm:xtbound} with
\[
    M = \max_t \left\|\begin{bmatrix} \mA_t^\T & \mB_{t+1}^\T \end{bmatrix} \begin{bmatrix} \vy_{t} \\
\vy_{t+1} \end{bmatrix} \right\| \leq \sqrt{1+\delta}\, M_y,
\]
to get
\begin{align*}
    \|\hat\vx_{T|T+1}- \hat\vx_{T|T}\| &\leq \|\hat\vx_{T|T+1}\| + \|\hat\vx_{T|T}\| \\
    &\leq \frac{M(2+\rho)}{(1-\eps_*)(1-\rho)} =: M_{\hat{x}}.
\end{align*}
Thus we have
\begin{equation*}
    \|\hat\vx_{t|T+1} - \hat\vx_{t|T}\| \leq
    M_{\hat{x}}\left(\frac{\theta}{1-\epsilon_*}\right)^{T-t}, \quad t\leq T,
\end{equation*}
and the $\{\hat\vx_{t|T}\}$ converge to some limit $\{\vx_{t}^*\}$.  

We can write the difference of the estimate $ \xhat_{t|T} $ from its limit point as the telescoping sum 
\begin{equation*}
    \hat\vx_{t|T} -   \vx_{t}^* = \sum_{\ell=0}^\infty \hat\vx_{t|T+\ell} -
    \hat\vx_{t|T+\ell+1}, 
\end{equation*}
and then using the triangle inequality, 
\begin{align}
\nonumber
\|\hat\vx_{t|T} -  \vx_{t}^*\| & \leq \sum_{\ell= 0}^\infty\|\hat\vx_{t|T+\ell} -
\hat\vx_{t|T+\ell+1}\|                                             \\
\nonumber                                     & \leq \sum_{\ell=
	0}^\infty\left(\frac{\theta}{1-\epsilon_*}\right)^{T+\ell-t}M_{\hat\alpha}                          \\
\label{eq:alphaconverge}                      & =	
M_{\hat{x}}\left(\frac{1-\epsilon_*}{1-\epsilon_*-\theta}\right)\left(\frac{\theta}{1-\epsilon_*}\right)^{T-t}.
\end{align}

\subsection{Proof of Lemma \ref{lemma:lip and conv of partially separable sum}}
\label{sec:aggliplemproof}
\noindent
We establish upper and lower bounds on the eigenvalues of the Hessian of $J_T$.  Let $ \mH = \nabla^2 J_T $ and\footnote{We assume that $\mG_t$ are zero padded to be the same size as $\mG$.} $ \mG_t = \nabla^2 f_t(\vx_{t-1}\,\vx_t)$, and consider the factorization $\mH = \mH^{o} + \mH^e $ with $\mH^{o}= \mG_1+\mG_3+\dots $ and $	\mH^{e}= \mG_2+\mG_4+\dots~$.
The fact that $f_t$ depends only on variables $ \vx_{t-1}$ and $ \vx_t$ implies that
$ \mH^{o} $ and $ \mH^{e} $ both are block diagonal matrices, and so both $\|\mH^o\|$ and $\|\mH^{e}\|$ have the easy upper bound $L_{\max}$.  It follows that $\|\mH\|\leq 2L_{\max}$.


%

For the lower bound, let $\bvy$ be an arbitrary block vector.  Then
\begin{alignat*}{2}
	\bvy^T \mH \bvy 
	&= \bvy^T \left(\sum_i \mG_i\right) \bvy 
	= \sum_i  \bvy^T \mG_i \bvy \\
	&= \sum_i  \begin{bmatrix} \vy_{i-1}^\T & \vy_i^\T \end{bmatrix}\mG_i \begin{bmatrix} \vy_{i-1} \\ \vy_i\end{bmatrix}\\
	& \geq \mu_{\min} \sum_i \|\begin{bmatrix} \vy_{i-1} \\ \vy_i\end{bmatrix}\|^2\\
	& \geq \mu_{\min} \sum_i \|\vy_i\|^2 = \mu_{\min}\|\bvy\|^2.
\end{alignat*}
Thus the smallest eigenvalue of $\mH$ is at least $\mu_{\min}$.

\subsection{Proof of Theorem~\ref{thm:convergence of the updates}}
\label{sec:weaker conv result proof}
\noindent
Recall that $\xhat_{t|T-1}$ is frame $t$ of the minimizer to $J_{T-1}$, and $\xhat_{t|T}$ is frame $t$ of the minimizer to $J_{T}$.  We will start by upper bounding how much the solution in frame $t$ moves as we transition from the solution of $J_{T-1}$ to the solution of $J_T$.  We bound this quantity, $ \| \xhat_{t|T} - \xhat_{t|T-1}\|$ by tracing the steps in the gradient descent algorithm for minimizing $J_T$ when initialized at the minimizer of $J_{T-1}$ and an appropriate choice for the new variables introduced in frame $T$.  Next, we give a simple argument that the $\{\hat\vx_{t|T} \}_{t\leq T}$ form a Cauchy (and hence convergent) sequence.

To avoid confusion in the notation, we will use $\ybar\nw{k} = \{\vy\nw{k}_t\}_{t=0}^{T}$ to denote the gradient descent iterates.
We initialize $\ybar\nw{0}$ with the $\{\hat\vx_{t|T-1}\}_t$ and a $\vw_T$ that obeys \eqref{eq:unigradbound}
\begin{equation}
	\label{eq:gdinitw}
	\vy\nw{0}_t := 
	\begin{cases}
		\xhat_{t|T-1}, & t=0,\dots,T-1\\
		\vw_T, & t=T,
	\end{cases}
\end{equation} 
and iterate using 
\begin{equation*}
	\ybar\nw{k+1}  \gets  \ybar\nw{k}+\bvd\nw{k}, \quad
	\bvd\nw{k}  \triangleq  -h \nabla J_T(\ybar\nw{k}).
\end{equation*}
We know that for an appropriate choice of stepsize $h$ (this will be discussed shortly), the iterations converge to $  \bvy^*=\hat\bvx_{T}$.  We also know that since $J_T$ is strongly convex and has a Lipschitz gradient (recall Lemma~\ref{lemma:lip and conv of partially separable sum}), this convergence is linear (see, for example, \cite[Thm 2.1.15]{nesterov2018lectures}):
\begin{equation}
	\label{eq:gd convg rate}
	\|\bvy\nw{k}-\bvy^*\|\leq   r_0 a^{k},\quad\text{where}~~
	a = \left(\frac{2 L_{\max} - \mu_{\min} }{2 L_{\max} + \mu_{\min}} \right),
\end{equation}
and $r_{0} = \|\bvy^* - \bvy\nw{0}\|$.

The key fact is that the $\bvd\nw{k}$, which are proportional to the gradient, are highly structured.  Using the notation $\nabla_t$ to mean ``gradient with respect to the variables in frame $t$'', we can write $\nabla J_T$ in block form as
\[
	\nabla J_T(\bvy\nw{k}) = 
	\medmath{
	\begin{bmatrix}
		\nabla_0 f_1(\vy\nw{k}_0,\vy\nw{k}_1) \\
		\nabla_1 \left( f_1(\vy\nw{k}_0,\vy\nw{k}_1) +  f_2(\vy\nw{k}_1,\vy\nw{k}_2) \right)\\
		\vdots \\
		\nabla_{T-1}\left( f_{T-1}(\vy\nw{k}_{T-2},\vy\nw{k}_{T-1}) + f_T(\vy\nw{k}_{T-1},\vy\nw{k}_T) \right)\\
		\nabla_T f_T(\vy\nw{k}_{T-1},\vy\nw{k}_T)
	\end{bmatrix}}.
\]
Because of the initialization \eqref{eq:gdinitw}, the first step $\bvd\nw{0}$  is zero in every single block location except the last two
\[
	\bvd\nw{0} = \begin{bmatrix}
		0& 
		\hdots &
		0&
		*&
		*
	\end{bmatrix}^T.
\]
As such, only the variables in these last two blocks change; we will have $\vy\nw{1}_t = \vy\nw{0}_t$ for $t=0,\ldots,T-2$.  Now we can see that $\nabla J_T(\ybar\nw{1})$ (and hence $\bvd\nw{1}$) is nonzero only in the last three blocks.  Propagating this effect backward, we can say that for any $1 \leq \tau\leq T$
\[
	\vd\nw{k}_{T-\tau} = \vzero,\quad\text{for}~~k=0,\ldots,\tau-1.
\]
Thus we have
\begin{multline*}
		\|\xhat_{T-\tau|T} - \xhat_{T-\tau|T-1} \| = \|\sum_{k=\tau}^\infty\vd\nw{k}_{T-\tau} \| 
	= \|\vy\nw{\tau}_{T-\tau}-\vy^*_{T-\tau}\| 
	\\	
	\leq \|\ybar\nw{\tau}-\ybar^* \| 
	\leq r_0 a^\tau.
\end{multline*}

To bound $ r_0$ we use the $\tilde\mu$ strong convexity of $J_T$ from Lemma~\ref{lemma:lip and conv of partially separable sum} and the assumption \eqref{eq:unigradbound},
\begin{multline*}
    r_0 = \|\ybar^* - \ybar\nw{0}\| 
    \leq \dfrac{1}{\tilde \mu} \|\nabla J_T(\ybar\nw{0} )\|
     \\
	= \dfrac{1}{\tilde \mu}\|\nabla f_T(\hat{\vx}_{T-1|T-1},\vw_T)\| 
	\leq {\tilde\mu}^{-1}  M_g,
\end{multline*}
and so 
\begin{equation}\label{eq:expon decaying update}
	\|\xhat_{t|T} - \xhat_{t|T-1} \|  
	\leq C_0 a^{T-t}, \quad  t < T ,\quad C_0:={\tilde\mu}^{-1}  M_g
\end{equation}
An immediate consequence of \eqref{eq:expon decaying update} is that $\{\hat\vx_{t|T}\}_T$  is a Cauchy sequence.  For all $ t\geq 0  $ and $n>l>0$  we have that
\begin{alignat*}{2}
	&\|{\xhat_{t|t+n+l}-\xhat_{t|t+n} }\| 
	=  \| {\sum_{k=1}^{l} \xhat_{t|t+n+k}-\xhat_{t|t+n+k-1}}\|\\
	&\quad  \leq \sum_{k=1}^{l} \|{\xhat_{t|t+n+k}-\xhat_{t|t+n+k-1}}\| 
	\leq C_0 \sum_{k=1}^{l} a^{n+k}\\
	&\hspace{2cm}  = C_0\left( \dfrac{1-a^l}{1-a} \right) a^{n+1},
\end{alignat*}
and so
\[
	\lim\limits_{n,l\rightarrow\infty} \norm{\xhat_{t|t+n+l}-\xhat_{t|t+n} } = 0,
\]	
and $\left\{\xhat_{t|T} \right\}_T$ has a limit point that is well defined \cite{royden1988real}, 
\[
	\vx_t^*  = \lim\limits_{T \rightarrow \infty}	\xhat_{t|T}.
\]
By taking $n=T-t $, and taking the limit as $ l\rightarrow\infty $ we have
\begin{alignat}{2}\label{eq:temp bnd}
	\|\xhat_{t|T} -\vx_{t}^* \|  \leq
	& C_0  \left( \dfrac{a}{1-a} \right) a^{T-t}.
\end{alignat} 
Plugging in our expression for $a$ yields
\begin{equation}
	\label{eq:final weak converg up-bnd}
	\left\|{\vx_{t}^* - \xhat_{t|T} }\right\|  \leq
	C_1\cdot  \left(\frac{2 L_{\max} - \mu_{\min} }{2 L_{\max} + \mu_{\min}} \right)^{T-t},
\end{equation}
with $ C_1 :=  C_0 \left( \dfrac{2 L_{\max} -\mu_{\min}}{2 \mu_{\min}} \right)$.

\subsection{Proof of  Theorem~\ref{thm:bondcnvgrad}}
\label{sec:cnvxconvrgproof}
\noindent
We being by recalling the gradient theorem, which states that for a twice differentiable function
\begin{equation}\label{eq:gradthm}
		\nabla f(\vy)  = \nabla f(\vx)  + \left(\int_0^1 \nabla^2 f(\vx + \tau(\vy-\vx ))d\tau\right) (\vy-\vx ),
\end{equation}
for any $\vx,\vy$.  In particular, since $\nabla f_t(\bar\vx_{t-1|t},\bar\vx_{t|t}) = \vzero$, we can write

\begin{equation*}\label{eq:lineintHesdef}
		\nabla f_t(\hat\vx_{t-1|T},\hat\vx_{t|T}) = 
	\begin{bmatrix} \mG_{t-1,t} & \mE_t^\T \\ \mE_t & \mG_{t,t}\end{bmatrix}
	\begin{bmatrix}\hat\vx_{t-1|T}-\bar\vx_{t-1|t} \\ \hat\vx_{t|T}-\bar\vx_{t|t} \end{bmatrix},
\end{equation*}

where
\begin{equation}\label{eq:lineintHes}
		\medmath{
	\begin{bmatrix} \mG_{t-1,t} & \mE_t^\T \\ \mE_t & \mG_{t,t}\end{bmatrix} = 
	\int_{0}^1 \nabla^2 f_t\left(\begin{bmatrix} \bar\vx_{t-1|t}+\tau(\hat\vx_{t-1|T}-\bar\vx_{t-1|t}) \\ \bar\vx_{t|t}+\tau(\hat\vx_{t|T}-\bar\vx_{t|t})  \end{bmatrix} \right)~d\tau
	},
\end{equation}
and we have used the stacked notation $f_t\left(\begin{bmatrix} \vu\\\vv\end{bmatrix}\right)$ in place of $f_t(\vu,\vv)$ for convenience.  With this notation, we can rewrite the optimality condition
\begin{align*}
	& \nabla J_T = \\
	& ~~ \medmath{
		\begin{bmatrix}
		\nabla_0 f_1(\hat\vx_{0|t},\hat\vx_{1|T}) \\
		\nabla_1 f_1(\hat\vx_{0|t},\hat\vx_{1|T}) + \nabla_1 f_2(\hat\vx_{1|T},\hat\vx_{2|T}) \\
		\vdots \\
		\nabla_{T-1}f_{T-1}(\hat\vx_{T-2|T},\hat\vx_{T-1|T}) + \nabla_{T-1}f_T(\hat\vx_{T-1|T},\hat\vx_{T|T}) \\
		\nabla_T f_T(\hat\vx_{T-1|T},\hat\vx_{T|T})
	\end{bmatrix}
	} 
    = \vzero
\end{align*}
as a block-tridiagonal system with the  same form as \eqref{eq:blktridiagsys} where

\begin{equation}\label{eq:lineintHtdef}
	\mH_t = 
\begin{cases}
	\mG_{0,1},& t=0;\\
	\mG_{t,t}+\mG_{t,t+1},& t=1,\dots,T-1\\
	\mG_{T,T},&t=T;
\end{cases}
\end{equation}

and 
\[
\vg_t= 
\begin{cases}
	\medmath{
		\begin{bmatrix}
			\mG_{0,1} & \mE_{0}^{\top}
		\end{bmatrix}
		\begin{bmatrix}
			\bar{\vx}_{0|1} \\
			\bar{\vx}_{1|1}
	\end{bmatrix}} , & t=0; \\
	\medmath{
		\begin{bmatrix}
			\mE_{t-1}&  \mG_{t,t} 
		\end{bmatrix}
		\begin{bmatrix}
			\bar{\vx}_{t-1|t} \\
			\bar{\vx}_{t|t} 
	\end{bmatrix}}\\
	+ 
	\medmath{
		\begin{bmatrix}
			\mG_{t,t+1}&  \mE_t^T
		\end{bmatrix}
		\begin{bmatrix}
			\bar{\vx}_{t|t+1} \\
			\bar{\vx}_{t+1|t+1} 
		\end{bmatrix}
	} &t=1,\dots T-1\\
	\medmath{
		\begin{bmatrix}
			\mE_{T-1}&  \mG_{T,T} 
		\end{bmatrix}
		\begin{bmatrix}
			\bar{\vx}_{T-1|T} \\
			\bar{\vx}_{T|T}
	\end{bmatrix}} & t=T.
\end{cases}
\]
	Since the $ f_t $ are strongly convex, we have that $\mu_{min}\leq \|G_{i,j}\|\leq L_{\max}$ for all $i$ and $ j=i,i+1$. Hence, the main diagonal blocks $ \mH_t $ satisfy $\|\kappa^{-1}\mH_t-\mId\|\leq \delta$ with $ \kappa = (2L_{\max}+\mu_{\min})/2$ and $ \delta =({2L_{\max}-\mu_{\min}})/({2L_{\max}+\mu_{\min}} )$. Defining $\theta$ as the smallest upper bound such that $ \|\mE_t\|\leq \kappa\theta$,
we have 
\[
	\|\begin{bmatrix} \mG_{t-1,t} & \mE_{t-1} \end{bmatrix} \| \leq \kappa\sqrt{L_{\max}^2+\theta^2}
\]  
and can use the same bound for $\|\begin{bmatrix}\mE_{t-1} & \mG_{t,t} \end{bmatrix}\|$, and so 
\[
	\|\vg_t\| ~\leq~ 2M_x\kappa\sqrt{L_{\max}^2+\theta^2} =: M_g,
\]
for all $t$.  Then if $ \|\mE_t\| \leq \mu_{\min}/2 $, by Lemma~\ref{lm:Qcond} there will be an $\eps_*$ such that $\|\mQ_t^{-1}\|\leq 1/(1-\eps_*)$ for all $t$, satisfying $\rho= \theta/(1-\eps_*)<1$, and the result follows by applying Lemma~\ref{lm:xtbound}.


\subsection{Proof of Lemma \ref{lem:truncdepbndtake2}}
\label{sec:lemtruncupdateproof}
\noindent
The optimality of $\{\xfhat_{t|T}\} $ implies $ \nabla J_{T-B}(\underline\xfhat_{[T-B:T|T]})$ is all zeros except for the first term. Proposition \ref{prop:solvWfixVar} implies the same for $ \nabla J_{T-B}(\xbarhat_{[T-B:T|T]})$. Applying \eqref{eq:gradthm} to $ J_{T-B}$ with $ \vy:= \underline\xhat_{[T-B:T|T]}) $ and $ \vx := \underline\xfhat_{[T-B:T|T]}) $ following the same process from the proof of Theorem \ref{thm:convergence of the updates}, we get

\begin{multline*}
	\label{eq:sparsebt3dsys}
	\left[
		\begin{array}{c|cccc}
			\mH'_{T-B}   & \mE_{T-B}^T& \      & \         &  \\ \hline
			\mE_{T-B} & \mH_{T-B+1} & \mE_{T-B+1}& \         &  \\
			\     & \          & \      & \          &  \\
			\     & \         &  \  \ddots    & \         &  \\
			\ &  \     &  \   &  &       \\
			\     & \         & \       \mE_{T-1} & \mH_T
		\end{array}
		\right]\\
		\times \left[\begin{array}{c}
			\xfhat_{T-B|T}-\xhat_{T-B|T}   \\ 
			\hline 	\xfhat_{T-B+1|T}-\xhat_{T-B+1|T}   \\ \\ \vdots    \  \\ \\\zhat_{T|T}-\xhat_{T|T} 
		\end{array}\right] 
		=
		\left[\begin{array}{c}
			\vq_0\\
			\hline  0  \\ \\ \vdots     \ \\  \\0
		\end{array}\right],
\end{multline*}
with $ \mH_{T-B+i},i=1,\dots,B $ as in \eqref{eq:lineintHtdef}, and 
$ \mH'_{T-B} = \mG_{T-B|T-B+1}$, and 
$
\vq_0  =  \nabla_{T-B} J_{T-B}(\underline\xfhat_{[T-B:T|T]})  -\nabla_{T-B} J_{T-B}(\underline\xfhat_{[T-B:T|T]}).
$
Applying Lemma \ref{lem:sparseb3dsystake2} with $ \alpha = \theta $ and $ \beta = (1-\delta)^{-1}$ gives the Lemma's result.

\subsection{Proof of Theorem~\ref{thm:cnvxbufferrbnd}}
\label{sec:trunccnvxproof}
\noindent
The proof follows by breaking the error term into two parts: a self error term due to early termination of the updates and a bias term due to errors in preceding terms. The argument then follows by expressing the bias error as a convergent recursive sequence.
	
Note that  $\xfstar_t = \xfhat_{t|t+B-1} $ --- the last round at which it was updated. Adding and subtracting $ \xfhat_{t|T+B-1}$, we have
\[
		\|\vx^*_t-\xfstar_t\|  \leq 
	\underbrace{\|\xhat_{t|t+B-1} - \xfhat_{t|t+B-1} \|}_{e_b} + \underbrace{\| \vx^*_t -  \xhat_{t|t+B-1}\| }_{e_f}
\]

For $ e_f $, we have the easy bound (cf. Theorem \eqref{thm:convergence of the updates})
\[	e_f = \| \vx^*_t -  \xhat_{t|t+B-1} \| \leq \cnvxcnvgres{B-1}.\]
	
Invoking Lemma  \ref{lem:truncdepbndtake2} with $ T=t+B-1 $, we have that 
\begin{alignat*}{2}
 \|\xhat_{t|t+B-1}- \xfhat_{t|t+B-1}\| 
 	& \overset{(1)}{\leq}\frac{\theta}{(1-\delta)}\|\xhat_{t-1|t+B-1}-\xfstar_{t-1}\|
	\\
	&		 \hspace{-1.0cm}\overset{(2)}{=} \frac{\theta}{(1-\delta)}\|\xhat_{t-1|t+B-1}-\xfhat_{t-1|t+B-2}\|\\
	&\hspace{-1.00cm}\overset{(3)}{\leq} \frac{\theta}{(1-\delta)} \| \xhat_{t-1|t+B-1} -   \xhat_{t-1|t+B-2}\| \\
	&\hspace{-1.0cm}+\frac{\theta}{(1-\delta)} \|\xhat_{t-1|t+B-2}  - \xfhat_{t-1|t+B-2} \|, 
\end{alignat*}
where $\overset{(1)}{\leq}$ a is direct result of Lemma  \ref{lem:truncdepbndtake2}, $ \overset{(2)}{=} $ holds because $\xfstar_{t-1}$ is the same as the solution $ \xfhat_{t-1|t+B-2}$, and in $ \overset{(3)}{\leq}$, we add and subtract $ \xhat_{t-1|t+B-2} $, and then apply the triangle inequity. 

From \eqref{eq:expon decaying update}, we know that
\[
\| \xhat_{t-1|t+B-1} -\xhat_{t-1|t+B-2} \| \leq \singupdatebound{B} .
\]
Defining
$r(t)\triangleq \|\xhat_{t|t+B-1}-\xfstar_{t}\|$,
we obtain the recursive relation
\begin{equation*}
	r(t) \leq \singupdatebound{B} + \frac{\theta}{(1-\delta)} r(t-1) =\singupdatebound{B} + \frac{\theta}{(1-\delta)} r(t-1) .
\end{equation*}
The geometric series, using \eqref{eq:contractiverecursiveseq}, converges to
\begin{equation*}
	e_b  \leq  \singupdatebound{B}\frac{1}{(1-\theta(1-\delta)^{-1})}.
\end{equation*}
Lastly, combining $ e_f$ and $e_b $ yields
\begin{equation*}
	\|\vx_t^* - \xfstar_t\|  \leq 
	C_b \left(\dfrac{2L_{\max} -\mu_{\min}}{2L_{\max} +\mu_{\min}}\right)^{B},
\end{equation*}
with,
\[
	C_b := C_0\left(\frac{1}{1-a}+\frac{1}{(1-\theta(1-\delta)^{-1})} \right).
\]


\bibliographystyle{./IEEEtran}
\bibliography{IEEEabrv,refs}

	
\end{document}